\documentclass{article}

%\usepackage{amssymb}
%\setcounter{tocdepth}{3}
%\usepackage{graphicx}

%%%%%%%%%%%%%%%%%%%%%%%%%%%%%%%%%%%%%%%%%%%%%%%%%%%%%%%

%\usepackage{} %% you can use any standard package
\usepackage{theorem}
\usepackage{amsmath,amssymb,amsfonts,graphicx,float}

\setlength{\headheight}{0mm}
\setlength{\oddsidemargin}{-0mm}
\setlength{\evensidemargin}{-0mm}
\setlength{\topmargin}{-15mm}        
\setlength{\textwidth}{160mm}
\setlength{\textheight}{220mm}

\def\BBox{\rule{2mm}{3mm}}
\def\QED{\hfill$\BBox$}
\newenvironment{proof}
{\begin{rm}\par\smallskip\noindent{\bf Proof.}\quad}{\QED\end{rm}}

%%%%%%%%%%%%%%%%%%%%%%%%%%%%%%%%%%%%%%%%%%%%%%%%%%%%%%%%%%

%\spnewtheorem{defn}{definition}{\bfseries}{\textrm}

% \renewcommand{\title}[1]{\vspace{\fill}
% \eject\addtolength{\baselineskip}{4pt}
% {\bfseries\LARGE #1}\\[3mm]\addtolength{\baselineskip}{-4pt}}
% \renewcommand{\author}[3]{\parbox[t]{75mm}
% {\begin{center}{\scshape #1}\\[3mm] #2\\
%  {\ttfamily #3} \end{center}}}
\theorembodyfont{\upshape}

\newtheorem{thm}{Theorem}[section]
        %% lemmas, props, cor, etc
\newtheorem{remark}[thm]{\bfseries Remark}    %%   are numbered consecutively

      %%   with the theorems.

 %% 
\newtheorem{prop}[thm]{\bfseries Proposition} %%

\theorembodyfont{\rmfamily}
\newtheorem{defn2}[thm]{\bfseries Definition}

\theorembodyfont{\rmfamily}
\newtheorem{defn}[thm]{\bfseries Definition}

\newtheorem{alg}[thm]{\bfseries Algorithm}

%\newenvironment{proof}{\medskip                    %% Proof
%\noindent{\scshape Proof:}}{\quad $\Box$\medskip}  %%

%%%%%%%%%%%%%%%%%%%%%%%%%%%%%%
% add below what is needed
%%%%%%%%%%%%%%%%%%%%%%%%%%%%%

\makeatletter
 
 \@addtoreset{equation}{section}
\makeatother

\begin{document}

%\begin{center}

%%%%%%%%%%%%%%%%%%%%%%%%%%%%%%%%%%%%%%%%%%%%%%%%%%%%%%%%
% Title
%%%%%%%%%%%%%%%%%%%%%%%%%%%%%%%%%%%%%%%%%%%%%%%%%%%%%%%%
\title{Complete enumeration of small realizable oriented matroids} 

%%%%%%%%%%%%%%%%%%%%%%%%%%%%%%%%%%%%%%%%%%%%%%%%%%%%%%%%
% begin : Authors
%%%%%%%%%%%%%%%%%%%%%%%%%%%%%%%%%%%%%%%%%%%%%%%%%%%%%%%%

\author{
Komei Fukuda
\thanks{
Research partially supported by the Swiss National Science Foundation Project
	No.~200021-124752/1.}\\
        Institute for Operations Research and \\
        Institute of Theoretical Computer Science, \\
ETH Zurich, Switzerland\\
{\tt fukuda@ifor.math.ethz.ch}
\and
Hiroyuki Miyata
\thanks{Research partially supported by Grant-in-Aid for Scientific Research from the Japan Society for the Promotion of Science, 
and Grant-in-Aid for JSPS Fellows.}\\
Graduate School of Information \\
Sciences,\\
Tohoku University, Japan\\
{\tt hmiyata@dais.is.tohoku.ac.jp}
\and
Sonoko Moriyama
\thanks{Research partially supported by Grant-in-Aid for Scientific
    Research from Ministry of Education, Science and Culture, Japan.}\\
    Graduate School of Information Sciences,\\
     Tohoku University, Japan\\
     {\tt moriso@dais.is.tohoku.ac.jp}
}

%\date{\empty}

\maketitle

\begin{abstract}
Enumeration of all combinatorial types of point configurations and polytopes
is a fundamental problem in combinatorial geometry. 
Although many studies have been done, most of them
are for $2$-dimensional and non-degenerate cases.  

Finschi and Fukuda (2001) published the first database of oriented 
matroids including degenerate (i.e., non-uniform) ones and of higher ranks.
In this paper, we investigate algorithmic ways to classify them 
in terms of realizability, although the underlying decision problem
of realizability checking is NP-hard.  As an application, we determine
all possible combinatorial types (including degenerate ones) of $3$-dimensional configurations of $8$ points, $2$-dimensional configurations of $9$ points and $5$-dimensional configurations of $9$ points.
We could also determine all possible combinatorial types of $5$-polytopes with $9$ vertices.
\end{abstract}

\section{Introduction}
Point configurations and convex polytopes play central roles in
computational geometry and discrete geometry.
For many problems, their {\it combinatorial structures} or 
{\it types} is often more important than their metric structures.
The combinatorial type of a point configuration is defined by all possible partitions of the points by a hyperplane 
(the definition given in (\ref{def:covectors})), 
and encodes various important information such as
the convexity, the face lattice of the convex hull and all possible triangulations.
One of the most significant merits to consider combinatorial types of them is that they are finite for any {\it fixed  sizes\/} (dimension and number of elements)
while there are infinitely many realizations of a fixed combinatorial type. 
This enables us to enumerate those objects and study them through 
computational experiments.
For example, Finschi and Fukuda~\cite{FF03} constructed a counterexample 
for the conjecture by da Silva and Fukuda~\cite{dSF98} using their database~\cite{FF}.
Aichholzer, Aurenhammer and Krasser~\cite{AAK02} and Aichholzer and Krasser~\cite{AK06} developed a database of point configurations~\cite{AAK} 
and showed usefulness of the database
by presenting various applications to computational geometry~\cite{AAK02,AK01,AK06}.

Despite its merits, enumerating combinatorial types of point configurations 
is known to be a quite hard task.
Actually, the recognition problem of combinatorial types of point configurations is
polynomially equivalent to the {\it Existential Theory of the Reals (ETR)},
the problem to decide whether a given polynomial equalities and inequalities system with 
integer coefficients has a real solution or not, even for $2$-dimensional point configurations~\cite{M88,S91}.
Because of this difficulty, enumerations have been done in the following two steps.

The first step is to enumerate a suitable super set of combinatorial types of point configurations 
which can be recognized efficiently.
One of the most frequently used structures has been {\it oriented matroids}.
Oriented matroids are characterized by simple axiom systems and many techniques for the enumeration have been
proposed.
Exploiting a canonical representation of oriented matroids and
algorithmic advances, Finschi and Fukuda~\cite{FF02,FF03} enumerated
oriented matroids including {\it non-uniform} ones (Definition \ref{def:uniform}), degenerate configurations in the abstract setting, 
on up to $10$ elements of rank $3$
and those on up to $8$ elements for every rank.  
In addition, Finschi, Fukuda and Moriyama enumerated {\it uniform} oriented matroids (Definition \ref{def:uniform})
in OM($4,9$) and OM($5,9$) using OMLIB~\cite{FF}, 
where OM($r,n$) denotes the set of all rank $r$ oriented matroids on $n$ elements.
Aichholzer, Aurenhammer and Krasser~\cite{AAK02}, and Aichholzer and Krasser~\cite{AK06} enumerated
uniform oriented matroids on up to $11$ elements of rank $3$.
The enumeration results are summarized in Table \ref{existing1}.
 
In the second step, to obtain all possible combinatorial types of point configurations, 
we need to extract oriented matroids that are {\it acyclic} and {\it realizable}.
Realizable oriented matroids (Definition \ref{def:realizable}) are oriented matroids that can be represented as vector configurations
and acyclic-ness (Definition \ref{def:acyclic}) abstracts the condition that a vector configuration can be associated to a point configuration.
While checking the acyclic-ness is trivial, the realizability problem is polynomially equivalent to ETR~\cite{M88,S91} and thus NP-hard.
In this paper, we show that the realizability problem can be practically
solved for small size instances by exploiting
sufficient conditions of realizability or those of non-realizability.

\subsection{Brief history of related enumeration}
The enumeration of realizable oriented matroids has a long history.
First, Gr\"unbaum~\cite{G03,G72} enumerated all realizable rank $3$ oriented matroids on up to $6$ elements
through the enumeration of hyperplane arrangements.
Then Canham~\cite{C71} and Halsey~\cite{H71} performed enumeration of all realizable rank $3$ oriented matroids on $7$ elements.
Goodman and Pollack~\cite{GP80a,GP80b} proved that rank $3$ oriented matroids on up to $8$ elements are all
realizable.
The enumeration of rank $3$ uniform realizable oriented matroids on $9$ elements is due to Richter~\cite{R88} and Gonzalez-Sprinberg and Laffaille~\cite{GSL89}.
The case of rank $4$ uniform oriented matroids on $8$ elements was resolved by Bokowski and Richter-Gebert~\cite{BRG90}.
Bokowski, Laffille and Richter resolved the case of rank $3$ uniform oriented matroids on $10$ elements (unpublished).
Recently, Aichholzer, Aurenhammer and Krasser~\cite{AAK02} developed a database of all realizable
non-degenerate acyclic oriented matroids of rank $3$ on $10$ elements and then
Aichholzer and Krasser~\cite{AK06} uniform ones of rank $3$ on $11$ elements.
The enumeration results are summarized in Table \ref{existing1_realizable}.

The enumeration of combinatorial types of convex polytopes also has a long history.
The combinatorial types of convex polytopes are defined by {\it face lattices} (See~\cite{G72,Z95}).
All combinatorial types of $3$-polytopes can be enumerated by using Steinitz' theorem~\cite{S22,SR34}.
We can also obtain all combinatorial types of $d$-polytopes with $n$ vertices using Gale diagrams for $n \leq d+3$~\cite{L70,F06}.
On the other hand, the enumeration of combinatorial types of
$d$-polytopes with $(d+4)$ vertices and those of $4$-polytopes
are known to be quite difficult~\cite{M88,R96}.
Gr\"unbaum and Sreedharan~\cite{GS67} listed all combinatorial types of simplical $4$-polytopes with $8$ vertices, and
Altshuler, Bokowski and Steinberg~\cite{ABS80} those of simplicial $4$-polytopes with $9$ vertices, and then
Altshuler and Steinberg~\cite{AS85} those of non-simplicial $4$-polytopes with $8$ vertices.
The enumeration results are summarized in Table \ref{existing2}.
\begin{table}[h]
\begin{center}
\scalebox{0.8}
{
\begin{tabular}{c | c | c | c | c | c | c | c | c | c | c | c |}
        & n = 3 & n = 4 & n = 5 & n = 6 & n = 7 & n = 8 & n = 9 & n = 10 & n = 11\\
 \hline
 r = 3  & \shortstack{1 \\ (1)}     &  \shortstack{2 \\ (1)}    &  \shortstack{4 \\ (1)}    &  \shortstack{17 \\ (4)}    &   \shortstack{143 \\ (11)}   &  \shortstack{4,890 \\ (135)}    &   \shortstack{461,053 \\ (4,382)}   & \shortstack{95,052,532 \\ (312,356)} & \shortstack{unknown \\ (41,848,591)} \\
 \hline 
 r = 4  &       &  \shortstack{1 \\ (1)}    &  \shortstack{3 \\ (1)}    &   \shortstack{12 \\ (4)}   &   \shortstack{206 \\ (11)}  & \shortstack{181,472 \\ (2,628)}   & \shortstack{unknown \\ (9,276,601)} & & \\
 \hline
 r = 5  &       &       &  \shortstack{1 \\ (1)}    &   \shortstack{4 \\ (1)}   &  \shortstack{25 \\ (1)}   & \shortstack{6,029 \\ (135)} & \shortstack{unknown \\ (9,276,601)} & & \\
 \hline
 r = 6  &       &       &        &  \shortstack{1 \\ (1)}   &  \shortstack{5 \\ (1)}    & \shortstack{50 \\ (1)}   & \shortstack{508,321 \\ (4,382)} & & \\
 \hline
 r = 7  &       &       &        &     &  \shortstack{1 \\ (1)}    & \shortstack{6 \\ (1)}   & \shortstack{91 \\ (1)} & \shortstack{unknown \\ (312,356)} & \\
 \hline
 r = 8  &       &       &        &   &      & \shortstack{1 \\ (1)}   & \shortstack{7 \\ (1)} & \shortstack{164 \\ (1)} & \shortstack{unknown \\ (41,848,591)}\\
 \hline
 r = 9  &       &       &        &    &     &   & \shortstack{1 \\ (1)} & \shortstack{8 \\ (1)} & \\
 \hline
 r = 10  &       &       &        &     &      &  &  & & \shortstack{1 \\ (1)} \\
 \hline
\end{tabular}
}
\end{center}
\caption{The numbers of simple oriented matroids ($n$: the number of elements, $r$: rank)
(reorientation class, the numbers enclosed by brackets are those of uniform oriented matroids)
\cite{AAK02,AK06,BRG90,C71,FF02,FF03,GSL89,G03,G72,H71,R88}}
\label{existing1}
\end{table} 

\begin{table}[h]
\begin{center}
\scalebox{0.8}
{
\begin{tabular}{c | c | c | c | c | c | c | c | c | c | c | c |}
        & n = 3 & n = 4 & n = 5 & n = 6 & n = 7 & n = 8 & n = 9 & n = 10 & n = 11\\
 \hline
 r = 3  & \shortstack{1 \\ (1)}     &  \shortstack{2 \\ (1)}    &  \shortstack{4 \\ (1)}    &  \shortstack{17 \\ (4)}    &   \shortstack{143 \\ (11)}   &  \shortstack{4,890 \\ (135)}    &   \shortstack{unknown \\ (4,381)}   & \shortstack{unknown \\ (312,114)} & \shortstack{unknown \\ (41,693,377)} \\
 \hline 
 r = 4  &       &  \shortstack{1 \\ (1)}    &  \shortstack{3 \\ (1)}    &   \shortstack{12 \\ (4)}   &   \shortstack{206 \\ (11)}  & \shortstack{unknown \\ (2,604)}   & \shortstack{unknown \\ (unknown)} & & \\
 \hline
 r = 5  &       &       &  \shortstack{1 \\ (1)}    &   \shortstack{4 \\ (1)}   &  \shortstack{25 \\ (1)}   & \shortstack{6,029 \\ (135)} & \shortstack{unknown \\ (unknown)} & & \\
 \hline
 r = 6  &       &       &        &  \shortstack{1 \\ (1)}   &  \shortstack{5 \\ (1)}    & \shortstack{50 \\ (1)}   & \shortstack{unknown \\ (4,381)} & & \\
 \hline
 r = 7  &       &       &        &     &  \shortstack{1 \\ (1)}    & \shortstack{6 \\ (1)}   & \shortstack{91 \\ (1)} & \shortstack{unknown \\ (312,114)} & \\
 \hline
 r = 8  &       &       &        &   &      & \shortstack{1 \\ (1)}   & \shortstack{7 \\ (1)} & \shortstack{164 \\ (1)} & \shortstack{unknown \\ (41,693,377)}\\
 \hline
 r = 9  &       &       &        &    &     &   & \shortstack{1 \\ (1)} & \shortstack{8 \\ (1)} & \\
 \hline
 r = 10  &       &       &        &     &      &  &  & & \shortstack{1 \\ (1)} \\
 \hline
\end{tabular}
}
\end{center}
\caption{The numbers of simple realizable oriented matroids ($n$: the number of elements, $r$: rank)
(reorientation class, the numbers enclosed by brackets are those of uniform realizable oriented matroids) \cite{AAK02,AK06,BRG90,C71,GSL89,G03,G72,H71,R88}}
\label{existing1_realizable}
\end{table}

\begin{table}[h]
\begin{center}
\scalebox{0.8}
{
\begin{tabular}{c | c | c | c | c | c | c | c | c | c |}
        & n = 3 & n = 4 & n = 5 & n = 6 & n = 7 & n = 8 & n = 9 \\
 \hline
 d = 2  & \shortstack{1 \\ (1)}     &  \shortstack{1 \\ (1)}    &  \shortstack{1 \\ (1)}    &  \shortstack{1 \\ (1)}    &   \shortstack{1 \\ (1)}   &  \shortstack{1 \\ (1)}    &   \shortstack{1 \\ (1)}   \\
 \hline 
 d = 3  &       &  \shortstack{1 \\ (1)}    &  \shortstack{2 \\ (1)}    &   \shortstack{7 \\ (2)}   &   \shortstack{34 \\ (5)}  & \shortstack{257 \\ (14)}   & \shortstack{2606 \\ (50)} \\
 \hline
 d = 4  &       &       &  \shortstack{1 \\ (1)}    &   \shortstack{4 \\ (2)}   &  \shortstack{31 \\ (5)}   & \shortstack{1294 \\ (37)} & \shortstack{unknown \\ (1142)} \\
 \hline
 d = 5  &       &       &        &  \shortstack{1 \\ (1)}   &  \shortstack{6 \\ (2)}    & \shortstack{116 \\ (8)}   & \shortstack{unknown \\ (unknown)} \\
 \hline
\end{tabular}
}
\end{center}
\caption{The numbers of combinatorial types of convex polytopes (the numbers enclosed by brackets are those of simplicial polytopes) ($n$: the number of vertices, $d$: dimension)
\cite{ABS80,AS85,G03,GS67}}
\label{existing2}
\end{table} 
However, there is no database of these objects including degenerate ones or 
of high dimensions, currently.
Many problems in combinatorial geometry remain open especially for high dimensional cases or degenerate cases,
and thus a database of combinatorial types for higher dimensional or degenerate ones will be of great importance.
For example, characterizing the $f$-vectors of $d$-polytopes is a big open problem for $d \geq 4$
while the same questions for 3-polytopes and for simplicial polytopes have already been solved~\cite{S1906,BL81,S80}.

Since Finschi and Fukuda developed a database of oriented matroids~\cite{FF,FF02} containing non-uniform ones,
the realizability classification of larger oriented matroids including non-uniform case has begun.
Existing non-realizability certificates such as non-Euclideanness~\cite{F82,M82} and  biquadratic final polynomials ~\cite{BR90}
and existing realizability certificates such as non-isolated elements~\cite{RS91} and solvability sequence ~\cite{BS86}
were applied to OM($4,8$) and OM($3,9$)~\cite{FMO09,NMFO05,NMF07}.
A new realizability certificate using polynomial optimization and generalized mutation graphs~\cite{NMF07}
and new non-realizability certificates non-HK*~\cite{FMO09} and applying semidefinite programming~\cite{MMI09}
were proposed and applied to OM($4,8$) and OM($3,9$).
Results of those classifications are summarized in Figure~\ref{classification}.
\begin{figure}[ht]
\begin{center}
\includegraphics[scale=0.34]{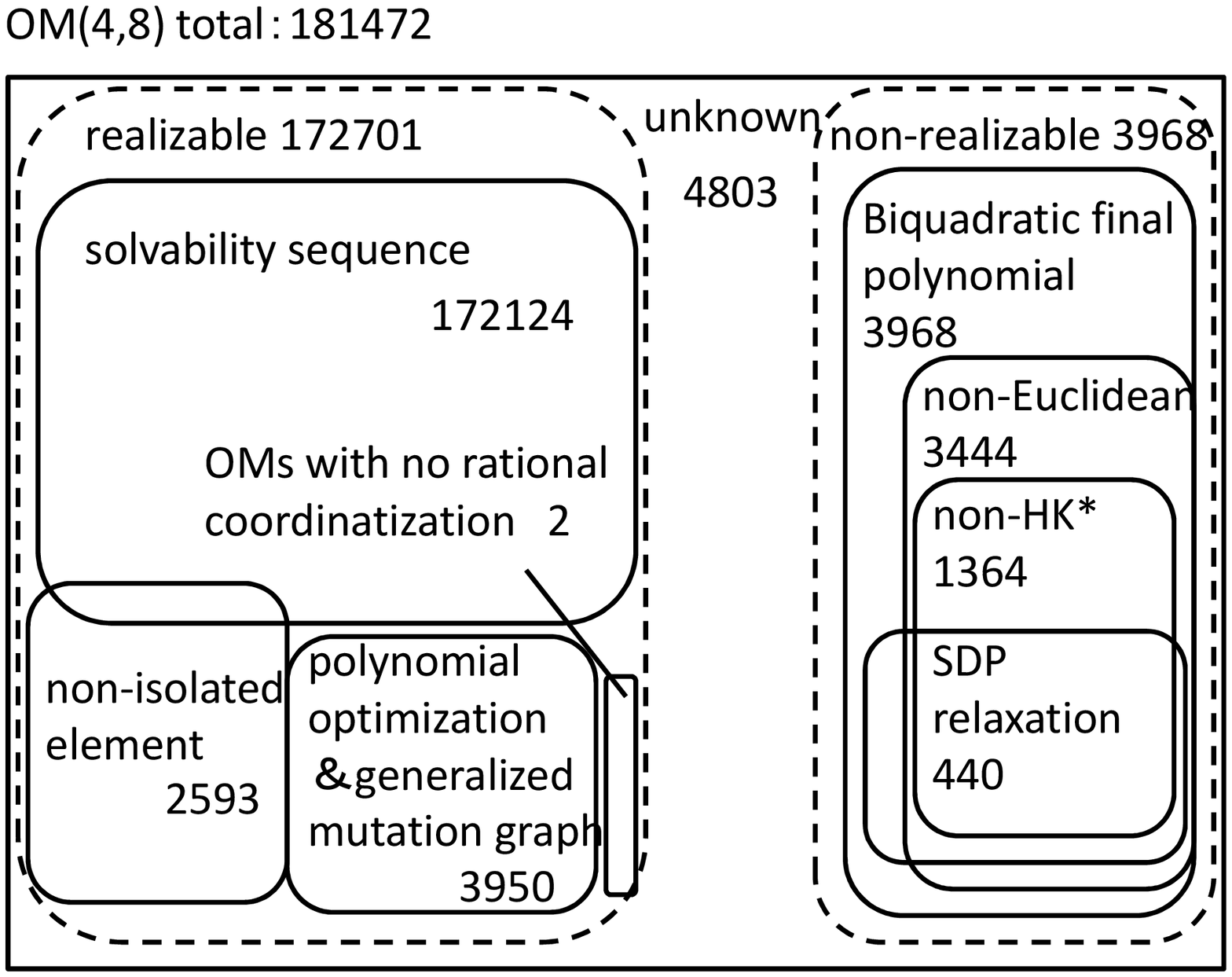}
\includegraphics[scale=0.30]{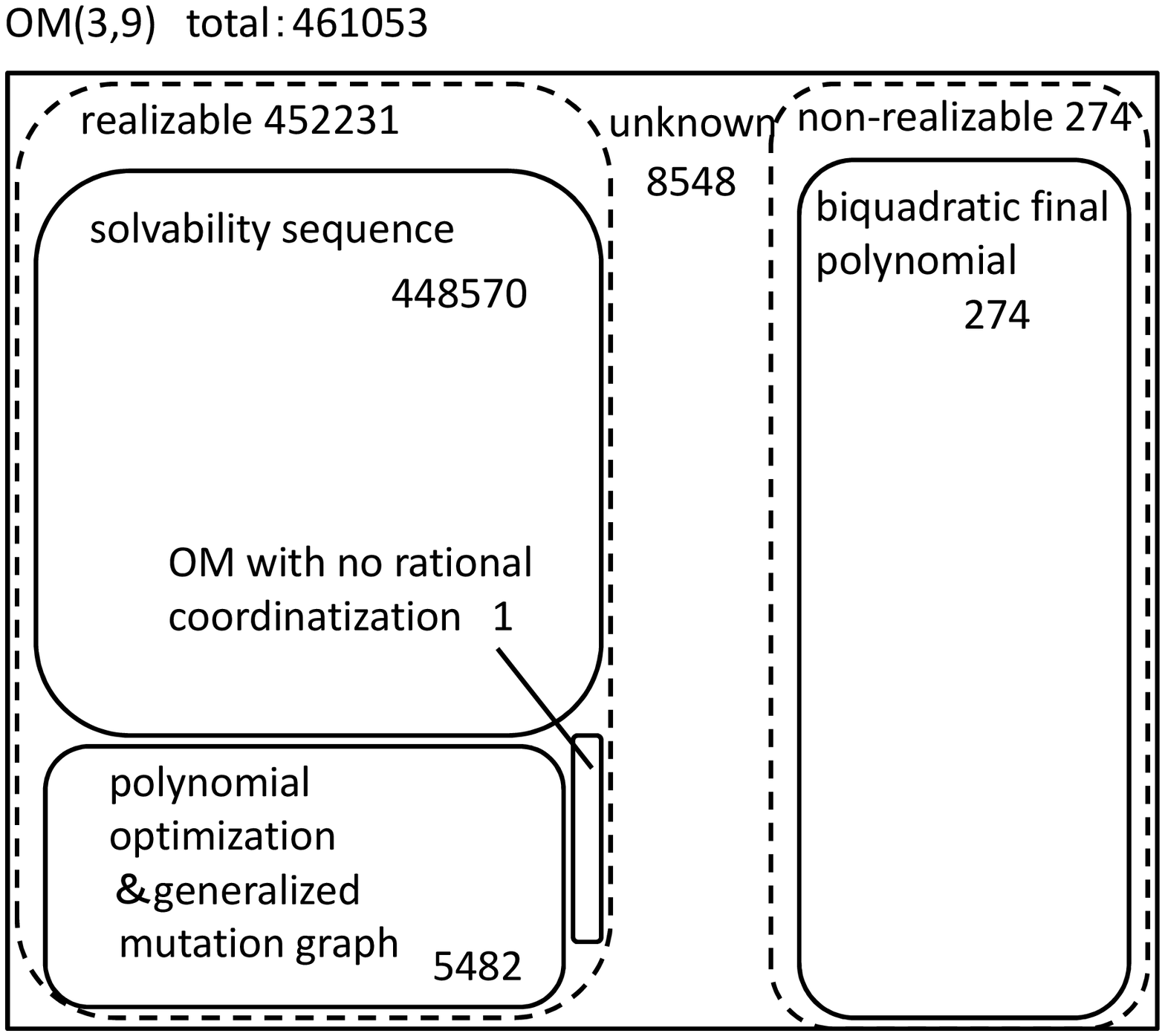}
\end{center}
\caption{Previous classifications of OM($4,8$) \& OM($3,9$) w.r.t. certificates~\cite{FMO09,NMFO05,NMF07,MMI09}}
\label{classification}
\end{figure}

It is important to observe that
there are $4803$ oriented matroids in OM($4,8$) and $8548$
oriented matroids in OM($3,9$) whose realizability was previously unknown.

\subsection{Our contribution}
In this paper, we complete the classification of OM($4,8$), OM($3,9$) and OM($6,9$) (Theorem~\ref{realizability result})
by providing a new method that could successfully find realizations 
of all previously unclassified oriented matroids.

%Recently, it is reported that methods based on random realizations are applied successfully to the realizability problem of
%uniform oriented matroids~\cite{AAK02,AK06} and that of triangulated surfaces~\cite{L08}.
%Those methods work very fast, but they have the follwoing obvious difficulity.
%The probablity that a random realization method generate degenerate configuration is $0$.
%In addition, the probabilty of obtaining a desired configuration is very small for large instances.
%On the other hand, computational algebraic methods such as
%Cylidrical Algebraic Decomposition~\cite{C75} solve the realizability problem precisely, 
%but the problem size  which we can deal with is quite limited.
As mentioned above, the realizability problem is as hard as solving general polynomial inequalities asymptotically.
There are several methods to solve general polynomial inequalities such as Cylindrical Algebraic Decomposition~\cite{C75},
but the problem size which can be practically dealt with 
is quite limited, and our instances turn out to be intractable.
One of the reasons is that those methods compute the complete description 
of cylindrical decomposition of the solution set, which is not necessary for our purpose.
It suffices to find one solution of the polynomial system to decide realizability,
which is usually a much easier task.
It is recently reported that methods based on random realizations are applied successfully to
the classification of the realizability of
uniform oriented matroids~\cite{AAK02,AK06} and that of triangulated surfaces~\cite{L08}.
However, those methods are not directly applicable to non-uniform oriented matroids.

In this paper,  we take a fresh look at the solvability sequence method~\cite{BS86}, which detects the realizability of a given oriented matroid,
provided one can eliminate all variables in the polynomial system using a certain elimination rule.
We extend the elimination rule and introduce some additional techniques so that
they can be applied to a broader class of oriented matroids.  We also
use random realizations when there are remaining variables in the final step.
Using this method, we manage to realize all realizable oriented matroids in OM($4,8$), OM($3,9$) and OM($6,9$).
This in turn proves that every non-realizable oriented matroid in these classes admits a biquadratic final polynomial certificate.

\begin{thm}~\mbox{}
\label{realizability result}
\begin{itemize}
\item[{\rm (a)}] Among $181,472$ oriented matroids in OM($4,8$) (reorientation class), $177,504$ oriented matroids 
are realizable and $3,968$ are non-realizable.
\item[{\rm (b)}] Among $461,053$ oriented matroids in OM($3,9$) (reorientation class), $460,779$ oriented matroids
are realizable and $274$ are non-realizable.
\item[{\rm (c)}] Among $508,321$ oriented matroids in OM($6,9$) (reorientation class), $508,047$ oriented matroids
are realizable and $274$ are non-realizable.
\end{itemize}
\end{thm}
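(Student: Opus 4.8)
The plan is to treat the statement as the output of an exhaustive realizability classification, taking as input the complete lists of oriented matroids in OM($4,8$), OM($3,9$) and OM($6,9$) furnished by the Finschi--Fukuda enumeration and the OMLIB database~\cite{FF,FF02}. Since the totals in each class are already known, it suffices to place every member into exactly one of the two bins, realizable or non-realizable. The first step is to set aside the members already settled by existing certificates: the known non-realizability certificates (non-Euclideanness, biquadratic final polynomials~\cite{BR90}, non-HK*, and the semidefinite-programming based ones~\cite{FMO09,MMI09}) flag a large collection as non-realizable, while the known realizability certificates (non-isolated elements~\cite{RS91} and the classical solvability sequence~\cite{BS86,NMF07}) flag most of the remainder as realizable. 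This leaves exactly the $4803$ oriented matroids in OM($4,8$) and the $8548$ in OM($3,9$) that prior work left undecided.

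The heart of the proof is a new realizability procedure to be applied to each undecided instance. For an oriented matroid $\mM$ of rank $r$ on $n$ elements, realizability is equivalent to the feasibility over $\RR$ of a system of strict sign conditions on the maximal minors of an unknown $r\times n$ matrix (the chirotope / Grassmann--Pl\"ucker relations). One processes the elements one at a time, maintaining a partial realization, and at each step applies an \emph{extended elimination rule}: whenever the next element can be placed so that all sign conditions linking it to the already-placed elements are simultaneously satisfiable by an explicit choice of rational coordinates, those coordinates are fixed and the process continues. When the elimination rule finally terminates with a set of residual free parameters, one substitutes random rational values for them and tests, in exact arithmetic, whether the resulting fully specified matrix realizes $\mM$ by evaluating the sign of every maximal minor; iterating over many random samples, one searches for a genuine realization.

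To obtain (a) and (b) one then checks that this procedure returns, for every previously undecided oriented matroid, an explicit rational matrix whose chirotope agrees with the given one. Since this verification is exact, no numerical tolerance enters and each such matroid is certifiably realizable; every matroid not realized in this way carries a biquadratic final polynomial certificate and is certifiably non-realizable, and counting then yields $177{,}504$ realizable versus $3{,}968$ non-realizable in OM($4,8$) and $460{,}779$ versus $274$ in OM($3,9$). Part (c) follows by exploiting that realizability is invariant under oriented matroid duality and that rank $6$ on $9$ elements is dual to rank $3$ on $9$ elements: the realizability status transfers through duality, and after the routine bookkeeping relating loops and parallel elements of the primal to coloops and series elements of the dual (so as to stay inside the classes of \emph{simple} oriented matroids counted here) one reads off $508{,}047$ realizable and $274$ non-realizable; equivalently, the same procedure can be run directly on OM($6,9$).

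The main obstacle will be the undecided instances on which the extended elimination rule stalls with several residual parameters: there the random-sampling step can fail to land in a realizable region even when one exists, because that region may be low-dimensional or geometrically intricate, and a failed search is not by itself a non-realizability proof. Consequently the argument closes only because (i) the elimination rule, together with the auxiliary techniques, can be pushed far enough that the residual parameter space is essentially always low-dimensional, and (ii) one verifies a posteriori that the union of ``explicit realization found'' and ``biquadratic final polynomial certificate exists'' covers each of the three classes without exception --- a single oriented matroid resisting both would leave the counts unproven.
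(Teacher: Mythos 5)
Your overall strategy is the same as the paper's: start from the Finschi--Fukuda database, settle most instances with the previously known certificates, attack the remaining $4803$ and $8548$ undecided oriented matroids with an extended solvability-sequence/elimination procedure followed by random assignment of the residual parameters, verify that the realized set and the biquadratic-final-polynomial set together exhaust each class, and transfer OM($3,9$) to OM($6,9$) by duality. (One cosmetic difference: the paper eliminates individual \emph{coordinates} of the polynomial system obtained after basis fixing, reduction to a minimal reduced system, and normalization, combined with explicit branching rules on the signs of leading coefficients, rather than inserting whole elements one at a time; but this is the same family of methods.)

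There is, however, one concrete step in your argument that would fail as written. You assert that the procedure returns, for every previously undecided realizable oriented matroid, ``an explicit rational matrix whose chirotope agrees with the given one,'' and you rely on exact rational verification throughout. This cannot work for all instances: the paper shows (and records as a separate theorem) that there are exactly one irrational realizable oriented matroid in OM($3,9$) (the Perles configuration), one in OM($6,9$), and two in OM($4,8$) (found by Nakayama), i.e., realizable oriented matroids admitting \emph{no} rational realization. On these, random rational sampling combined with exact sign checking will never succeed, and by your own closing remark a single unresolved instance leaves the counts unproven. The paper closes this gap by treating the residual equality constraints as a zero-dimensional ideal and solving them exactly via Gr\"obner bases, then certifying the resulting algebraic (irrational) coordinates. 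Your proposal needs this extra mechanism (or an equivalent exact algebraic fallback) for the three irrational instances; with it, the argument matches the paper's.
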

We note here that the classification of OM($6,9$) is obtained from the classification of OM($3,9$) and the duality of oriented matroids~\cite{BLSWZ99}.
As a byproduct, we obtain the following results.
\begin{thm}~\mbox{}
\label{point configurations polytopes}
\begin{itemize}
\item[{\rm (a)}] There are exactly $15,287,993$ combinatorial types of 
$2$-dimensional configurations of $9$ points, \\$105,128,749$ $5$-dimensional configurations of $9$ points and 
$10,559,305$ $3$-dimensional configurations of $8$ points.
\item[{\rm (b)}] There are exactly $47,923$ combinatorial types of $5$-dimensional polytopes with $9$ vertices.
Among them, $322$ are simplicial and $126$ are simplicial neighborly.
\end{itemize}
\end{thm}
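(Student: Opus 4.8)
The plan is to obtain Theorem~\ref{point configurations polytopes} as a direct consequence of Theorem~\ref{realizability result} together with the classical dictionary between point configurations (respectively, convex polytopes) and acyclic oriented matroids; see~\cite{BLSWZ99}. Recall that the combinatorial types of $n$-point configurations affinely spanning $\RR^{d}$ with distinct points correspond, under this dictionary, to the acyclic simple oriented matroids of rank $d+1$ on $n$ elements, and that such a combinatorial type is realizable by an actual point configuration precisely when the associated oriented matroid is realizable. The three families listed in part~(a) are therefore the acyclic realizable members of OM($3,9$), OM($6,9$) and OM($4,8$) --- exactly the classes whose realizability Theorem~\ref{realizability result} settles. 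For part~(b), a $5$-polytope with $9$ vertices corresponds to an acyclic realizable oriented matroid of rank $6$ on $9$ elements all of whose elements are extreme (vertices of the convex hull); its face lattice is recovered from the covector data, and two such oriented matroids yield the same combinatorial type of polytope exactly when their face lattices are isomorphic.

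Concretely I would proceed in four steps. \textbf{Step 1.} Start from the Finschi--Fukuda enumeration of all reorientation classes of simple oriented matroids in OM($3,9$), OM($4,8$) and OM($6,9$), and flag the realizable classes using Theorem~\ref{realizability result} (for OM($6,9$), through the duality invoked there). \textbf{Step 2.} For each realizable class, enumerate its acyclic reorientations --- equivalently, its topes --- and reduce them modulo the automorphisms of the class; the resulting lists are the combinatorial types of configurations, and counting them gives the three numbers in~(a). \textbf{Step 3.} For the polytope count, restrict the rank-$6$, $9$-element list to those oriented matroids whose every element is extreme, compute the face lattice of each, and partition this collection into isomorphism classes of face lattices; the number of classes is the count $47{,}923$ in~(b). \textbf{Step 4.} For each polytope class, inspect the face lattice to test whether every facet has exactly $5$ vertices (simpliciality) and whether every pair of vertices spans an edge (neighborliness, which for $d=5$ means $2$-neighborly), obtaining the refined counts $322$ and $126$.

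The mathematically hard part --- deciding realizability --- has already been carried out in Theorem~\ref{realizability result}, so what remains is a large but conceptually routine computation, and its difficulty is one of scale and bookkeeping rather than of depth. The two points that genuinely demand care are: handling symmetry correctly when expanding each of the (between $10^{5}$ and $10^{6}$) reorientation classes into the far more numerous combinatorial types of configurations, so that the output is simultaneously complete and free of duplicates; and, for the polytopes, deciding combinatorial isomorphism of face lattices, since distinct oriented matroids can share a face lattice and must be merged. Both are handled with the canonical-form machinery already used to build the oriented matroid database, and the enumeration yields the stated tallies.
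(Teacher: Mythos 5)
Your proposal matches the paper's own procedure: the paper likewise obtains part (a) by generating the relabeling classes of acyclic realizable oriented matroids from the reorientation-class database (using Theorem~\ref{realizability result}), and part (b) by extracting the matroid polytopes, computing their face lattices, keeping those realized by some realizable matroid polytope, and counting non-isomorphic lattices (Tables~\ref{no_acyclic} and~\ref{no_face_lattices}). The only cosmetic difference is that the paper computes face lattices of all matroid polytopes and then filters by realizability, whereas you filter first; the resulting counts are the same.
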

Our classification results with certificates are available at\\
{\tt http://www-imai.is.s.u-tokyo.ac.jp/\~{}hmiyata/oriented\_matroids/}\\
To make the results as reliable as possible, 
we recomputed realizability or non-realizability even for oriented matroids 
whose realizability had been known previously.
In the above web page, realizations of all realizable oriented matroids and final polynomials of all non-realizable oriented matroids are uploaded.
One can check correctness of our results there.
\\  
\\
{\bf Organization of the paper:}\\
In Section 2, we present some basic notions on oriented matroids.
Then we discuss a standard method to find realizations in Section 3.
We first review the existing method to decrease the size of a polynomial system.
and explain a new method to search for solutions of polynomial systems.
We apply these methods to the classification of OM($4,8$), OM($3,9$) 
and OM($6,9$) in Section 4,
and conclude the paper in Section 5.
%\begin{cor}~\mbox{}
%\begin{itemize}
%\item Among 177,476 realizable oriented matroids in OM($4,8$), precisely 2 oriented matroids
%are  irrational.
%\item Among 460,779 realizable oriented matroids in OM($3,9$), precisely 1 oriented matroid
%is irrational.
%\item Among 508,047 realizable oriented matroids in OM($6,9$), precisely 1 oriented matroid
%is irrational.
%\item All of combinatorial types of 5-dimensional polytopes on 9 vertices can be realized as rational polytopes.
%\end{itemize}
%\end{cor}

\section{Preliminaries}
In this section, we review basic notions on oriented matroids that are
used in the paper. 
%However, we explain only the chirotope axioms and will not explain the other axioms such as the covector axioms and the circuit axioms here.
For further details about oriented matroids, see~\cite{BLSWZ99}.
\subsection{Point configurations and their combinatorial abstractions}
Let $P=\{ p_{1},\dots,p_{n} \}$ be a point configuration in $\mathbb{R}^{r-1}$.
We define a map $\chi : \{ 1,\dots,n\}^{r} \rightarrow \{ +,-,0 \}$ by
\[ \chi (i_{1},\dots,i_{r}) := {\rm sign}(\det (v_{i_{1}},\dots,v_{i_{r}})), \]
where $v_{1}:=\begin{pmatrix}p_{1} \\ 1\end{pmatrix}$,\dots,$v_{n}:=\begin{pmatrix}p_{n} \\ 1\end{pmatrix} \in \mathbb{R}^{r}$ are
the associated vectors of $p_{1},\dots,p_{n}$.
We define the map $\chi$ as the {\it combinatorial type\/} of $P$, which
satisfies the following properties (a), (b) and (c) with $E= \{ 1,\dots,n\}$.
%For example, in 2-dimensional case, $\chi (i_{1},i_{2},i_{3}) = + \ (-,0)$ implies that points 
%$p_{i_{1}},p_{i_{2}},p_{i_{3}}$
%are located couterclockwisely (clockwisely, colinearly).
\begin{defn2}(Chirotope axioms)\\
Let $E$ be a finite set and $r\geq 1$  an integer. A {\it chirotope of rank $r$ on $E$} is a mapping 
$\chi : E^{r} \rightarrow \{ +1,-1,0\}$ which satisfies the following properties
for any $i_{1},\dots,i_{r},j_{1},\dots,j_{r} \in E$. 
\begin{itemize}
\item[(a)] $\chi$ is not identically zero.
\item[(b)] $\chi (i_{\sigma (1)},\dots,i_{\sigma (r)}) = {\rm sgn} (\sigma) \chi (i_{1},\dots,i_{r})$
for all $i_1,\dots,i_r \in E$ and any permutation $\sigma$.
\item[(c)] If $\chi (j_{s},i_{2},\dots,i_{r})\cdot \chi (j_{1},\dots,j_{s-1},i_{1},j_{s+1},\dots,j_{r}) \geq 0 \text{ for all } s=1,\dots,r, \text{ then }$\\
$\chi (i_{1},\dots,i_{r}) \cdot \chi (j_{1},\dots,j_{r}) \geq 0.$
\end{itemize}
\label{chirotope_axioms}
\end{defn2} 
We note here the third property is an abstraction of Grassmann-Pl\"ucker relations:
\[ [i_{1}\dots i_{r}][j_{1}\dots j_{r}] - \sum_{s=1}^{r}{[j_{s}i_{2}\dots i_{r}][j_{1}\dots j_{s-1}i_{1}j_{s+1}\dots j_{r}] }= 0,\]
where $[i_{1},\dots,i_{r}]:=\det(v_{i_{1}},\dots,v_{i_{r}})$ for all $i_{1},\dots,i_{r} \in E$.
We define an {\it oriented matroid\/} as a pair of a finite set $E$ and a chirotope $\chi:E^{r} \rightarrow \{ +1,-1,0 \}$
satisfying the above axioms.
From now on, we set $E:= \{ 1,\dots,n\}$ throughout this section.
Since $\chi$ is completely determined by the values on 
$\Lambda (n,r):=\{ (i_1,...,i_r) \in \mathbb{N}^r \mid 1 \leq i_1 < \dots < i_r \leq n \}$,
we sometimes regard the restriction $\chi |_{\Lambda (n,r)}$ as the chirotope itself.
We call the pair $(E, \{ \chi ,-\chi \} )$ a {\it rank $r$ oriented matroid with $n(=|E|)$ elements}.
The concept of degeneracy is also defined for oriented matroids as follows.
\begin{defn2}
\label{def:uniform}
An oriented matroid $(E,\{ \chi,-\chi \})$ is said to be {\it uniform\/} 
if $\chi (i_{1},\dots,i_{r}) \neq 0$ for all $(i_{1},\dots ,i_{r}) \in \Lambda (n,r)$,
otherwise {\it non-uniform}.  
\end{defn2}
Note that every ($d+1$)-subset of a $d$-dimensional point configuration spans a $d$-dimensional space if and only if
the underlying oriented matroid is uniform.

For a point configuration $P=\{ p_1,\dots,p_n \} \subset \mathbb{R}^d$, 
the data of chirotope is known to be equivalent to the following data.
\begin{equation}
 {\cal V}_P^* := \{ ({\rm sign}(a^T p_1 - b), {\rm sign}(a^T p_2 - b), \dots, {\rm sign}(a^T p_n - b)) \mid a \in \mathbb{R}^d, b \in \mathbb{R} \}. 
 \label{def:covectors}
\end{equation}
An element of ${\cal V}^*_P$ is called a {\it covector}.
Another axiom system of oriented matroids ({\it Covector axiom}) can be obtained by abstracting properties of ${\cal V}_P^*$.
See \cite[Chapter 3]{BLSWZ99}, for details.

Note that every point configuration has a covector $(++\dots +)$.
Abstracting this property, {\it acyclic} oriented matroids are defined.
%%%%%%%%%%%
\begin{defn2}
\label{def:acyclic}
An oriented matroid is said to be {\it acyclic} if it has a covector ${(++ \dots +)}$.
\end{defn2}
%%%%%%%%%%%
It is known that there is the one-to-one correspondence between acyclic {\it realizable} oriented matroids
and combinatorial types of point configurations.

From ${\cal V}_P^*$, we can read off the convexity of $P$.
For $i=1,\dots,n$, $p_i$ is an extreme point of $P$ if and only if there is a covector  
$\underset{\text{$i$-th component}}{(+\dots +0+\dots +)} \in {\cal V}_P^*$.
In this way, {\it matroid polytopes} are introduced as abstractions of convex point configurations.
\begin{defn2}
\label{def:matroid_polytope}
An acyclic oriented matroid on a ground set $\{ 1,\dots,n\}$ 
is called a {\it matroid polytope} if it has a covector 
$\underset{\text{$i$-th component}}{(+\dots +0+\dots +)}$ for all $i=1,\dots,n$.
\end{defn2}
For a matroid poltyope, its facets are defined by non-negative {\it cocircuits} i.e., non-negative minimal covectors.
For details on matroid polytopes, see~\cite[Chapter 9]{BLSWZ99}.

\subsection{The realizability problem}
Every vector configuration has the underlying oriented matroid, 
but the converse is not true because ``non-realizable'' oriented matroids exist.
%A problem to decide whether a given oriented matoroid has the corresponding vector configuration or not is
%called the realizability problem of orieted matroids. 
\begin{defn2}(The realizability problem of oriented matroids)\label{realizability_def}\\
Given a rank $r$ oriented matroid $M=(E,\{ \chi ,-\chi \} )$ with $n$ elements,
the {\it realizability problem for\/} $(E,\{ \chi ,-\chi \} )$ is  to decide
whether the following polynomial system has a real solution $v_{1},\dots,v_{n} \in \mathbb{R}^{r}$:
\begin{equation}
\label{realizability}
 {\rm sign}(\det(v_{i_1},\dots,v_{i_r})) = \chi (i_1,\dots,i_r) 
 \text{ for all }  (i_1,...,i_r) \in \Lambda (n,r).
 \end{equation}
\end{defn2}

\begin{defn2}
\label{def:realizable}
A rank $r$ oriented matroid is said to be {\it realizable} if it arises from an $r$-dimensional vector configuration,
otherwise {\it non-realizable}.
\end{defn2}
Not every realizable oriented matroid admits a point configuration
because a positive combination of some vectors can be $0$
while the same is not true for the associated vector configuration of any point configuration.
We can extract combinatorial types of point configurations by picking up acyclic realizable oriented matroids.

\subsection{Isomorphic classes}
In this paper, we consider only {\it simple oriented matroids}, those without parallel elements and loops, see~\cite{BLSWZ99}.
For simple oriented matroids, we consider the following two equivalent classes.
\begin{defn2} Let $M=(E,\{ \chi, -\chi \} )$ and $M'=(E,\{ \chi', -\chi' \} )$ be oriented matroids.
\begin{itemize}
\item[(a)] $M$ and $M'$ are {\it relabeling equivalent\/} if
\[ \chi(i_{1},\dots,i_{r})=\chi '(\phi (i_{1}),\dots,\phi (i_{r})) \text{ for all $i_1,\dots,i_r \in E$} \]
  or
\[ \chi(i_{1},\dots,i_{r})=- \chi '(\phi (i_{1}),\dots,\phi (i_{r})) \text{ for all $i_1,\dots,i_r \in E$}\]
for some permutation $\phi$ on $\{ 1,\dots,n\}$.
\item[(b)] $M$ and $M'$ are {\it reorientation equivalent\/} if $M$ and $-_{A}M'$ are relabeling equivalent for some $A \subset E$,
where $-_{A}M'$ is the oriented matroid determined by the chirotope $-_{A}\chi'$ defined as follows. 
\[ -_{A}\chi' (i_1,\dots,i_r):=(-1)^{|A \cap \{ i_1,\dots,i_r\} |}\chi'(i_1,\dots,i_r) \text{ for
$i_1,\dots,i_r \in E$.} \]
\end{itemize}
\end{defn2}
We note here that the realizability is completely determined by the {\it reorientation classes} of oriented matroids, the equivalence
classes defined by the reorientation equivalence. 
This is because  if any oriented matroid in a reorientation class
is realizable, then every oriented matroid in the class is realizable. 
A database of oriented matroids by Finschi and Fukuda~\cite{FF} consists of the representatives of the reorientation classes.
On the other hand, we say that point configurations $P,P'$ has the same combinatorial type 
if the combinatorial type of $P$ and that of $P'$ belong to the same relabeling class.

\section{Methods to recognize realizability of oriented matroids}
Recognizing that a given oriented matroid is realizable amounts to
 finding a solution of the associated polynomial system (\ref{realizability})
in Definition \ref{realizability_def}.

Our strategy is as follows.
We  first simplify the polynomial system as much as possible, namely,
eliminate as many variables as possible using simple elimination rules
and then try random realizations if no further elimination of variables
is possible by the elimination rules.

\subsection{Inequality reduction techniques}
There are three critical parameters to measure difficulty of solving polynomial systems:
the number of variables, the degrees of variables and the number of constraints.
We must try to keep each of them low. For such purposes, we employ
some techniques used in \cite{BRS90,BS86, N07,NMF07}.
Let us review these techniques briefly.

\subsubsection{Fixing a  basis} \label{BasisFix}
A technique explained in the following was introduced in~\cite{BS86} 
and was used to reduce the degrees of constraints and the number of variables in~\cite{BS86,N07,NMF07}.

We assume that $\chi (i_1,...,i_r)=+$ for some $i_1,...,i_r$ by taking the negative of $\chi$ if necessary.
Let $M_V:=(v_1,...,v_n) \in \mathbb{R}^{d \times n}$ be the representation matrix of a vector configuration $V$.
Because the combinatorial type of $V$ is invariant under any invertible linear transformations, we can assume that
$v_{b_{1}}=(1,0,...,0)^{T},v_{b_{2}}=(0,1,0,...,0)^{T},...,v_{b_{r}}=(0,...,0,1)^{T}$ for an $r$-tuple $(b_1,...,b_r) \in \Lambda (n,r)$
such that $\chi (b_1,...,b_r) = +$.
We call such an $r$-tuple of indices a {\it basis}.
We obtain a new polynomial system as follows.
\begin{equation} \label{BasisFix:eq1}
\begin{cases}
{\rm sign}(\det(v_{i_{1}},...,v_{i_{r}})) = \chi (i_{1},...,i_{r}) \text{ for all $(i_{1},...,i_{r}) \in \Lambda(n,r),$ and} \\
v_{b_{1}}=(1,0,...,0)^{T},v_{b_{2}}=(0,1,0,...,0)^{T},...,v_{b_{r}}=(0,...,0,1)^{T}.
\end{cases}
\end{equation}
The resulting polynomial system (\ref{BasisFix:eq1}) depends on the choice of bases.
In the next section, we present how to find a suitable basis.

Finally, note that the degree of each constraint can be computed easily by the following formula:
\[ deg(\det(v_{i_{1}},...,v_{i_{r}})))= |\{ b_{1},....,b_{r} \} \setminus \{ i_{1},...,i_{r} \} |, \]
and that the sign of each variable $v_{kl}$ is determined by the obvious equation:
\[ v_{kl}=\det(v_{b_{1}},...,v_{b_{k-1}},v_{l},v_{b_{k+1}},...,v_{b_{r}}).\]

\subsubsection{Minimal reduced systems} \label{MinReduced}
Most of the techniques in this section are
introduced in~\cite{BS86} for uniform oriented matroids
to reduce the number of constraints
and are extended to general oriented matroids in~\cite{N07,NMF07}.

Recall that in the polynomial system (\ref{BasisFix:eq1}), which describes the condition of realizability, there are $\binom{n}{r}$ constraints.  
There are some possible redundancies in these constraints, as one may reconstruct $\chi$ according to partial values of $\chi$ by 
using Axiom (c) of a chirotope.
For example, if $\chi (1,2,3)=\chi (1,4,5)=\chi (1,2,4)=\chi (1,3,5) = \chi (1,2,5)=+$,
then we obtain $\chi (1,3,4)=+$ using Axiom (c).
For a subset $R$ of $\Lambda (n,r)$, we denote $\langle R \rangle$
the set of all $r$-tuples whose $\chi$ signs are implied by the sign information
on $\chi$ over $r$-tuples in $R$ and the chirotope axioms.   
A minimal subset $R(\chi )$ of $\Lambda (n,r)$ needed to decide $\chi$ is called a {\it minimal reduced system\/}
for $(E,\{ \chi , - \chi \} )$.
First, we observe that {\it generalized mutations} to be defined below
must be contained in every minimal reduced system of $\chi$.
\begin{defn2}
An $r$-tuple $\lambda \in \Lambda (n,r)$ is called a {\it generalized mutation\/} of $\chi$ if there exists an oriented matroid $(E ,\{ \chi',-\chi'\})$
such that
$\chi (\mu ) = \chi'(\mu )$ for all $\mu \in \Lambda (n,r) \setminus \{ \lambda \}$
and
$\chi (\lambda ) \neq \chi'(\lambda )$.
\end{defn2}
This definition is different from that of~\cite{NMF07,N07}, but turns out 
to be more natural because of the proposition below.

Let us denote the set of all generalized mutations of $\chi$ by $GMut(\chi )$.
A nice characterization of the mutations of uniform oriented matroids is given in~\cite[Theorem 3.4.]{RS88},
but such a characterization is not known for generalized mutations.
However, one can compute $GMut(\chi )$ using the following proposition, which is an immediate extension of ~\cite[Proposition 3.3.]{RS88}.
\begin{prop}
An $r$-tuple $\lambda :=(i_1,\dots,i_r) \in \Lambda (n,r)$ is a generalized mutation of $\chi$
if and only if $\lambda$ {\it is not determined by Grassmann-Pl\"ucker relations},
i.e., the following condition holds:
\begin{equation}
\label{Gmut_Grassmann}
\begin{cases}
\chi (j_{1},\dots, j_{r}) = 0, 
\text{ or } \\
\chi (j_{1},\dots, j_{r}) \neq 0 \text{ and }
\{ \chi (j_{s},i_{2},\dots, i_{r})\chi (j_{1},\dots, j_{s-1},i_{1},j_{s+1},\dots, j_{r}) \mid  s=1,...,r\} \supset \{ +,- \}
\end{cases}
\end{equation}
for all $1 \leq j_1 < \dots < j_r \leq n$.
\begin{proof} 
In this proof, we use the notation $\langle k_1,\dots,k_r \rangle$ to represent an $r$-tuple $(k'_1,\dots,k'_r)$ such that 
$\{ k'_1,\dots,k'_r\} = \{ k_1,\dots,k_r\}$
and $k'_1 \leq \dots \leq k'_r$, for $k_1,\dots,k_r \in \mathbb{N}$.

To prove the if-part, let us take a map $\chi:\Lambda (n,r) \rightarrow \{ +,-,0\}$  satisfying Condition (\ref{Gmut_Grassmann}).
We consider a map $\chi':\{1,\dots,n\} \rightarrow \{ +,-,0\}$ satisfying Axioms (a) and (b) of a chirotope
and $\chi' (\mu) = \chi (\mu)$ for all $\mu \in \Lambda (n,r) \setminus \{ \lambda \}$ and $\chi (\lambda ) \neq \chi' (\lambda )$.
We prove that $\chi'$ also satisfies Axiom (c).
For $(k_1,\dots,k_r), (l_1,\dots,l_r) \in \Lambda (n,r)$, assume that
\begin{equation}
 \chi' (l_{s},k_{2},\dots,k_{r})\cdot \chi' (l_{1},\dots,l_{s-1},k_{1},l_{s+1},\dots,l_{r}) \geq 0 \text{ for all } s=1,\dots,r. \label{assumption}
\end{equation} 
Under this condition, we prove $\chi' (k_1,\dots,k_r)\chi' (l_1,\dots,l_r) \geq 0$ by the following case analyses.
\\
\\
{\bf (I)} $k_1 \in \{ l_1,\dots,l_r\}$.\\
Let $t \in \{ 1,\dots,r\}$ be an integer such that $k_1 = l_t$.
\[ \chi' (l_{t},k_{2},\dots,k_{r})\cdot \chi' (l_{1},\dots,l_{t-1},k_{1},l_{t+1},\dots,l_{r}) 
= \chi' (k_1,\dots,k_{r})\cdot \chi' (l_{1},\dots,l_{r}) \geq 0. \]
{\bf (II)} $k_1 \notin \{ l_1,\dots,l_r\}$.\\
{\bf (II-A)} $|\{ k_1,\dots,k_r\} \cap \{ l_1,\dots,l_r \} | = r-1$.\\
Let us take $l_u \notin \{ k_1,\dots,k_r \}$. Then we have
\begin{align*}
\chi' (l_{u},k_{2},\dots,k_{r}) \cdot \chi' (l_{1},\dots,l_{u-1},k_{1},l_{u+1},\dots,l_{r}) 
&=  
(-1)^u \cdot \chi' (l_{1},\dots,l_{r}) \cdot (-1)^u \cdot \chi' (k_{1},\dots,k_{r}) \\
&= \chi' (l_{1},\dots,l_{r}) \cdot \chi' (k_{1},\dots,k_{r}) \geq 0. 
\end{align*}
{\bf (II-B)} $|\{ k_1,\dots,k_r\} \cap \{ l_1,\dots,l_r\}| < r-1$.\\
{\bf (i)} $\langle k_1,\dots,k_r \rangle = \lambda$.\\
Since $k_1 \notin \{ l_1,\dots,l_r\}$ and $|\{ k_1,\dots,k_r\} \cap \{ l_1,\dots,l_r\}| < r-1$,
$\langle l_{s},k_{2},\dots,k_{r} \rangle \neq \lambda$ and $\langle l_{1},\dots,l_{s-1},k_{1},l_{s+1},\dots,l_{r} \rangle \neq \lambda$
for $s=1,\dots,r$.
Therefore, Condition (\ref{assumption}) implies
\[ \chi (l_{s},k_{2},\dots,k_{r})\cdot \chi (l_{1},\dots,l_{s-1},k_{1},l_{s+1},\dots,l_{r}) \geq 0 \text{ for all } s=1,\dots,r.\]
To satisfy Condition (\ref{Gmut_Grassmann}), the following must hold.
\[ \chi'(l_1,\dots,l_r) =  \chi (l_1,\dots,l_r) = 0. \]
{\bf (ii)} $\langle l_1,\dots,l_r \rangle = \lambda$.\\
Proved similarly to Case (i).
\\
\\
{\bf (iii)} $\langle k_1,\dots,k_r \rangle \neq \lambda$, $\langle l_1,\dots,l_r \rangle \neq \lambda$.\\
First, we consider the case when there exists $s_0$ such that $ \lambda = \langle l_{s_0},k_2,\dots,k_{r} \rangle$
and $\chi (l_1,\dots,l_{s_0-1},k_1,l_{s_0+1},\dots,l_r) \neq 0$.
Let us write down Condition (\ref{Gmut_Grassmann}) for $\chi$ under $i_1:=l_{s_0},i_2:=k_2,\dots,i_r:=k_r$ 
and $j_1:=l_1,\dots,j_{s_0-1}:=l_{s_0-1},j_{s_0}:=k_1,j_{s_0+1}:=k_{s_0+1},\dots,j_r:=l_r$:
\begin{align*} 
& \{ \chi (l_{s},k_{2},\dots, k_{r})\chi (l_{1},\dots, l_{s_0-1},k_{1},l_{s_0+1},\dots, l_{s-1},l_{s_0},l_{s+1},\dots,l_r) \mid  s=1,...,s_0-1\} \\
& \cup
\{ \chi (l_{s},k_{2},\dots, k_{r})\chi (l_{1},\dots, l_{s-1},l_{s_0},l_{s+1},\dots, l_{s_0-1},k_1,l_{s_0+1},\dots,l_r) \mid  s=s_0+1,...,r\} \\
&\cup
\{ \chi (k_{1},k_{2},\dots, k_{r})\chi (j_{1},\dots, j_{r}) \mid  s=1,...,r\} \\
&= \\
& \{ (-1) \cdot \chi (l_{s},k_{2},\dots, k_{r})\chi (l_{1},\dots, l_{s_0-1},l_{s_0},l_{s_0+1}\dots l_{s-1},k_1,l_{s+1},\dots,l_r) \mid  s=1,...,s_0-1\} \\
& \cup
\{ (-1) \cdot \chi (l_{s},k_{2},\dots, k_{r})\chi (l_{1},\dots, l_{s-1},k_1,l_{s+1}\dots l_{s_0-1},l_{s_0},l_{s_0+1},\dots,l_r) \mid  s=s_0+1,...,r\} \\
&\cup
\{ \chi' (k_{1},\dots, k_{r})\chi' (l_{1},\dots, l_{r}) \} \\
&\supset \{ +,- \} 
\end{align*}
This implies $\chi' (k_{1},\dots, k_{r})\chi' (l_{1},\dots, l_{r}) = +$.
\\
We consider the other case. In this case, Condition (\ref{assumption}) implies
\[ \chi (l_{s},k_{2},\dots,k_{r})\cdot \chi (l_{1},\dots,l_{s-1},k_{1},l_{s+1},\dots,l_{r}) \geq 0 \text{ for all } s=1,\dots,r\]
and thus
\[ \chi' (k_{1},\dots, k_{r})\chi' (l_{1},\dots, l_{r}) = \chi (k_{1},\dots, k_{r})\chi (l_{1},\dots, l_{r}) = +.  \]
Finally, we conclude $\chi' (k_{1},\dots, k_{r})\chi' (l_{1},\dots, l_{r}) \geq 0$ for all cases.
This proves the if-part. 

The only if-part is proved by contradiction.
Suppose that there exist $1 \leq j_1 < \dots < j_r \leq n$ such that
\[
\chi (j_{1},\dots, j_{r}) \neq 0 \text{ and }
\{ \chi (j_{s},i_{2},\dots, i_{r})\chi (j_{1},\dots, j_{s-1},i_{1},j_{s+1},\dots, j_{r}) \mid  s=1,...,r\} = \text{ $\{ + \}$ (or $\{- \}$ or $\{ 0\}$).}
\]
In this case, we obtain $\chi (i_1,\dots,i_r) = \chi(j_1,\dots,j_r) \chi (j_{1},i_{2},\dots, i_{r}) \chi (i_{1},j_{2},\dots, j_{r})$ using 
Axiom (c) of a chirotope.
On the other hand, for a map $\chi':\Lambda(n,r) \rightarrow \{ +,-,0 \}$ taking the same value as that of $\chi$ except for $\lambda$,
the following holds.
\begin{align*}
& \chi' (i_1,\dots,i_r) 
\neq
\chi (i_1,\dots,i_r) 
=\chi (j_1,\dots,j_r) \chi (j_{1},i_{2},\dots, i_{r}) \chi (i_{1},j_{2},\dots, j_{r})\\
&= \chi'(j_1,\dots,j_r) \chi' (j_{1},i_{2},\dots, i_{r}) \chi' (i_{1},j_{2},\dots, j_{r}). 
\end{align*}
This means that $\chi'$ violates Axiom (c) of a chirotope.
\end{proof}
\end{prop}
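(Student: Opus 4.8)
The plan is to characterize generalized mutations by directly unpacking the defining property: $\lambda$ is a generalized mutation precisely when flipping the value of $\chi$ at $\lambda$ (while keeping Axioms (a), (b)) still yields a chirotope, i.e.\ still satisfies Axiom (c). So I would set up a candidate map $\chi'$ that agrees with $\chi$ off $\lambda$, satisfies (a) and (b) automatically, and ask exactly when (c) survives. Axiom (c) is an inequality relating $\chi$-values on pairs of $r$-tuples; the key observation is that the instances of (c) that could fail are exactly those in which one of the ``small'' entries $(j_s, i_2,\dots,i_r)$ or $(j_1,\dots,i_1,\dots,j_r)$ rearranges to $\lambda$, and the Grassmann--Pl\"ucker-style condition (\ref{Gmut_Grassmann}) says that in every such configuration there are already opposite signs among the $r$ summand products, so the sign of $\chi(i_1,\dots,i_r)\chi(j_1,\dots,j_r)$ is \emph{not} pinned down by (c) — hence we are free to flip it.

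For the \emph{if}-part I would take $\chi$ satisfying (\ref{Gmut_Grassmann}) for $\lambda = (i_1,\dots,i_r)$, define $\chi'$ to agree with $\chi$ except at $\lambda$, and verify Axiom (c) for $\chi'$ by a case analysis on how an arbitrary pair $(k_1,\dots,k_r),(l_1,\dots,l_r)$ interacts with $\lambda$. The easy cases are when $k_1 \in \{l_1,\dots,l_r\}$ or when $|\{k_\bullet\}\cap\{l_\bullet\}| = r-1$: there the relevant instance of (c) collapses (up to sign) to the trivially true statement $\chi'(k)^2 \geq 0$ or $\chi'(l)\chi'(k) = \chi'(l)\chi'(k)$, so nothing is violated regardless of the flip. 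The substantive cases are when $\langle k_1,\dots,k_r\rangle = \lambda$, or $\langle l_1,\dots,l_r\rangle = \lambda$, or one of the ``small'' $r$-tuples appearing in (c) equals $\lambda$; here I would rewrite the hypothesis (\ref{assumption}) for $\chi'$ in terms of $\chi$, feed the data into condition (\ref{Gmut_Grassmann}) applied with a suitable relabeling of the indices $i_\bullet, j_\bullet$ (this is where the permutation-sign bookkeeping in (b) is needed), and deduce that $\{+,-\}$ already occurs among the summand products, forcing $\chi'(k)\chi'(l)$ to be non-negative (in fact $+$). The \emph{only if}-part is the contrapositive: if (\ref{Gmut_Grassmann}) fails at some $(j_1,\dots,j_r)$, then Axiom (c) for $\chi$ actually determines $\chi(\lambda)$ as a product $\chi(j_1,\dots,j_r)\chi(j_1,i_2,\dots,i_r)\chi(i_1,j_2,\dots,j_r)$, so flipping $\chi$ at $\lambda$ produces a map violating (c) — no valid $\chi'$ exists and $\lambda$ is not a generalized mutation.

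The main obstacle I expect is the bookkeeping in the hard subcase of the \emph{if}-part (the analogue of case (iii) in the excerpt): one must correctly match the generic indices $(k_s, l_s)$ of the tested instance of Axiom (c) to the role of $(i_1,\dots,i_r)$ and $(j_1,\dots,j_r)$ in (\ref{Gmut_Grassmann}), keeping track of the $(-1)$ factors introduced by sorting entries into increasing order via Axiom (b), and handle separately whether a ``small'' $r$-tuple of the form $\langle l_{s_0},k_2,\dots,k_r\rangle$ can equal $\lambda$ (with its partner being nonzero) or not. Everything else is routine: the trivial cases are immediate, and the \emph{only if}-direction is a one-line application of Axiom (c). I would therefore organize the write-up around the case tree on $|\{k_\bullet\}\cap\{l_\bullet\}|$ and the position of $\lambda$, isolating the sign computation into the single nontrivial case.
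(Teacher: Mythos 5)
Your plan is correct and follows essentially the same route as the paper's proof: the same case tree on $|\{k_\bullet\}\cap\{l_\bullet\}|$ and on which of the tuples in the tested instance of Axiom (c) rearranges to $\lambda$, the same use of condition (\ref{Gmut_Grassmann}) with relabeled indices (and the attendant sign bookkeeping) in the one nontrivial subcase, and the same one-line contrapositive for the only-if direction. You have also correctly isolated where the real work lies, namely the case analogous to (iii) in the paper.
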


The above proposition shows that $GMut(\chi ) \subset R(\chi )$.
%We note here that considering $3$-term Grassmann-Pl\"ucker relations is not sufficient in the non-uniform case
%and thus considered all Grassmann-Pl\"ucker relations.
%Starting from $GMut(\chi )$,
Therefore, we can compute $R(\chi )$ by the following procedure, which is a slightly modified version of
the algorithm in~\cite{BRS90,N07}.
\begin{alg}(Computing a minimal reduced system for $\chi$)\\
Input: A chirotope $\chi :\{ 1,...,n \}^{r} \rightarrow \{ +,-,0 \}$,  $b \in \Lambda (n,r)$ s.t. $\chi (b) \neq 0$.\\
Output: A minimal reduced system $R(\chi )$.\\
{\bf Step 1:} Compute $GMut(\chi )$ and set $R:=GMut(\chi )$.\\
{\bf Step 2:} $C:= \langle R \rangle$. If $C =\Lambda (n,r)$, return $R$. Otherwise go to Step $3$.\\
{\bf Step 3:} Choose $\mu \in \Lambda (n,r) \setminus C$ that minimizes $|\mu \setminus b|$, and add it to $R$. Go to Step $2$.
\end{alg}

As a result, we obtain a new polynomial system as follows.
\begin{equation} \label{MinReduced:eq1}
 {\rm sign}(\det(v_{i_{1}},...,v_{i_{r}})) = \chi (i_{1},...,i_{r}) \text{ for $(i_{1},...,i_{r}) \in R(\chi ).$}
\end{equation}
One might be able to simplify the reduced polynomial system 
by selecting a different basis.
One way is to search a better basis $b'$, in the sense of the totality of the degrees of the constraints,
by computing $\sum_{\beta \in R(\chi )}{|b' \setminus \beta |}$.
Various weight functions of constraints are considered in~\cite{BRS90,N07}.
This is the subject of the next section.

\subsubsection{Normalization (eliminating homogeneity)} \label{Normalization}
A technique to be explained here is used in~\cite{N07,NMF07} to reduce the number of variables and the degrees
of polynomials, and in addition, to eliminate some of the equality constraints.

First, we negate negative variables to make all variable non-negative,
and obtain the following new polynomial system:
\begin{align*}
\begin{cases}
{\rm sign}(\det(v'_{i_{1}},...,v'_{i_{r}})) = (-1)^{s_{i_{1}...i_{r}}} \cdot \chi (i_{1},...,i_{r}) \text{ for $(i_{1},...,i_{r}) \in R(\chi ).$} \\
v'_{b_{1}}=(1,0,...,0)^{T},v'_{b_{2}}=(0,1,0,...,0)^{T},...,v'_{b_{r}}=(0,...,0,1)^{T}, 
\end{cases}
\end{align*}
where $s_{i_{1}...i_{r}}$ denotes the number of negative variables in $v_{1i_{1}},...,v_{ri_{1}},v_{1i_{2}},...,v_{ri_{r}}$.

Let $\alpha_{1},...,\alpha_{n},\beta_{1},...,\beta_{r}$ be arbitrary positive numbers and
$w_{1},...,w_{r} \in \mathbb{R}^{n}$ be row vectors of $M_{V'}:=(v'_{1},...,v'_{n})$.
Then
\[ \det(\alpha_{i_{1}}v'_{i_{1}},...,\alpha_{i_{r}}v'_{i_{r}})=\alpha_{i_{1}} \cdots \alpha_{i_{r}}\det(v'_{i_{1}},...,v'_{i_{r}}), \]

\[ \det
\begin{pmatrix}
\beta_{1}w_{1}(i_{1},...,i_{r})\\
\vdots \\
\beta_{r}w_{r}(i_{1},...,i_{r})
\end{pmatrix}
=
\beta_{1}\cdots \beta_{r}
\det
\begin{pmatrix}
w_{1}(i_{1},...,i_{r})\\
\vdots \\
w_{r}(i_{1},...,i_{r})
\end{pmatrix},\]
where $w_{j}(i_{1},...,i_{r})$ denotes the vector whose $k$-th element is an $i_k$-th element of $w_{j}$ for $1 \leq i_1 < ... < i_r \leq n$
and $1 \leq j \leq r$.
This allows us to
fix two indices $1 \leq i \leq n$ and $1 \leq j \leq r$ 
and to assume that every component of the vectors $v'_{i}$ and $w_{j}$ is $0$ or $1$.

Furthermore, we can eliminate some of equality constraints as follows.
Let us classify the constraints arising from $\chi (i_1,...,i_r)$ according to the values of $|\{ b_1,...,b_r \} \setminus \{ i_1,...,i_r \} |$.
\begin{description}
\item[Case (a)] $|\{ b_1,...,b_r \} \setminus \{ i_1,...,i_r \} | = 1$. \\
The constraint corresponds to the sign constraint ``$v'_{ij} > 0$'' or ``$v'_{ij} = 0$'' for certain $1 \leq i \leq n$, $1 \leq j \leq r$.
\item[Case (b)] $|\{ b_1,...,b_r \} \setminus \{ i_1,...,i_r \} | = 2$. \\
Let $\{ b_1,...,b_r \} \setminus \{ i_1,...,i_r \} =: \{ i,i'\}$ and $\{ i_1,...,i_r \}  \setminus \{ b_1,...,b_r \} =: \{ j,j'\}$.
Then the constraint is of the form $v'_{ij}v'_{i'j'}-v'_{i'j}v'_{ij'}=0$, where each variable may be fixed to $0$ or $1$.
If the row $i$ is normalized, the constraint become $v'_{i'j'}-v'_{i'j}=0$ and we can eliminate the constraint in the trivial way
without increasing the degree of the polynomial system.
If the column $j$ is normalized, the constraint become $v'_{ij'}-v'_{i'j'}=0$ and can be eliminated in the trivial way.
\item[Case (c)] $|\{ b_1,...,b_r \} \setminus \{ i_1,...,i_r \} | > 2$.\\
The constraint cannot be eliminated trivially. 
\end{description}

\begin{remark}
Equality constraints are main obstacles for random realizations.
The normalization technique turns out very useful in
removing equations so that random realizations can be
applied.
In addition, the technique decreases the number of variables and that of constraints by $1$.
Therefore, we minimize the number of equality constraints over all choices of bases and columns and rows to be normalized,
which can be computed easily.
\end{remark}

%KF

\subsection{Searching for solutions of polynomial systems} \label{SSPS}
Now we present a practical method to find a solution of a polynomial system.

Cylindrical Algebraic Decomposition method~\cite{C75}
 eliminates variables preserving the feasibility until 
polynomial systems contain only one variable, 
solves the one-variable polynomial systems,
and then lifts the projected solutions.
It may sound quite simple but each step is a very hard task in general.
Therefore, we take a fresh look at the solvability sequence method~\cite{BS86},
by which some variables might be eliminated in a simpler way.
\begin{prop}\label{elimination1} {\rm (\cite{BS86})}
Let $l_{1},l_{2},l_3 \geq 0$ be integers and 
$R^{(1)}_i,R^{(2)}_i,L^{(1)}_j,L^{(2)}_j,P_k$ be polynomials for $i=1,...,l_1, j=1,...,l_2, k=1,...,l_3$.
Then the  feasibility of rational polynomial system:
\begin{equation} \label{SSPS:eq1}
\begin{cases}
y < \frac{R^{(1)}_{i}(x_{1},...,x_{n})}{R^{(2)}_{i}(x_{1},...,x_{n})} &  (i=1,...,l_{1}),\\
y > \frac{L^{(1)}_{j}(x_{1},...,x_{n})}{L^{(2)}_{j}(x_{1},...,x_{n})} &  (j=1,...,l_{2}),\\
P_k(x_1,...,x_n) >  (\text{or }=) \ 0 & (k=1,...,l_3)\\ 
\end{cases}
\end{equation}
is equivalent to that of the following polynomial system:
\begin{equation} \label{SSPS:eq2}
\begin{cases}
L^{(1)}_{j}(x_{1},...,x_{n})R^{(2)}_{i}(x_{1},...,x_{n}) < R^{(1)}_{i}(x_{1},...,x_{n})L^{(2)}_{j}(x_{1},...,x_{n}) & (i=1,...,l_{1},j=1,...,l_{2}), \\
P_k(x_1,...,x_n) >  (\text{or }=)  \ 0 & (k=1,...,l_3)
\end{cases}
\end{equation}
under the condition $R^{(2)}_{i}(x_{1},...,x_{n})L^{(2)}_{j}(x_{1},...,x_{n}) > 0$ for $i=1,...,l_{1},j=1,...,l_{2}$.
\end{prop}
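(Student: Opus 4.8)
The statement to prove is Proposition~\ref{elimination1}: the feasibility of the system (\ref{SSPS:eq1}) involving a variable $y$ bounded above by ratios $R^{(1)}_i/R^{(2)}_i$ and below by ratios $L^{(1)}_j/L^{(2)}_j$, together with extra constraints $P_k \gtrless 0$, is equivalent to the feasibility of (\ref{SSPS:eq2}), which has $y$ eliminated, under the positivity assumption $R^{(2)}_i L^{(2)}_j > 0$.

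This is essentially the observation that eliminating a variable that appears only in two-sided linear (after clearing denominators) bounds amounts to requiring every lower bound to be less than every upper bound — a Fourier–Motzkin-style elimination for a single variable. Let me sketch how I'd prove it.

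The plan is to prove the equivalence by two implications, fixing the values $x_1, \dots, x_n$ throughout (so we only reason about the existence of $y$).

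First, the forward direction: suppose $(x_1,\dots,x_n,y)$ satisfies (\ref{SSPS:eq1}). The constraints $P_k \gtrless 0$ carry over verbatim. For each pair $(i,j)$, from $y < R^{(1)}_i/R^{(2)}_i$ and $y > L^{(1)}_j/L^{(2)}_j$ we get $L^{(1)}_j/L^{(2)}_j < R^{(1)}_i/R^{(2)}_i$. Now I need to clear denominators: multiplying both sides by $R^{(2)}_i L^{(2)}_j$, which is $> 0$ by hypothesis, preserves the inequality direction and yields exactly $L^{(1)}_j R^{(2)}_i < R^{(1)}_i L^{(2)}_j$. So (\ref{SSPS:eq2}) holds.

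Second, the converse: suppose $(x_1,\dots,x_n)$ satisfies (\ref{SSPS:eq2}) and the positivity condition. Again $P_k$ carries over; I must produce a witness $y$. Set $U := \min_{i} R^{(1)}_i(x)/R^{(2)}_i(x)$ and $\ell := \max_{j} L^{(1)}_j(x)/L^{(2)}_j(x)$ (with the conventions $U := +\infty$ if $l_1 = 0$ and $\ell := -\infty$ if $l_2 = 0$, so the degenerate cases are handled). For any $i,j$, dividing the inequality $L^{(1)}_j R^{(2)}_i < R^{(1)}_i L^{(2)}_j$ by $R^{(2)}_i L^{(2)}_j > 0$ gives $L^{(1)}_j/L^{(2)}_j < R^{(1)}_i/R^{(2)}_i$; taking max over $j$ and min over $i$ yields $\ell < U$ (when both index sets are nonempty; otherwise the strict inequality $\ell < U$ is immediate). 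Hence the open interval $(\ell, U)$ is nonempty and any $y$ in it — e.g. $y := (\ell + U)/2$ when both are finite, or a suitable shift otherwise — satisfies all the original strict inequalities of (\ref{SSPS:eq1}). This establishes the equivalence.

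The argument is essentially elementary; there is no deep obstacle. The only points requiring care are the bookkeeping of the degenerate cases $l_1 = 0$ or $l_2 = 0$ (where one side of the interval is unbounded), and making sure the sign condition $R^{(2)}_i L^{(2)}_j > 0$ is genuinely used — it is exactly what licenses clearing denominators without flipping inequalities, and it is what lets us pass between the "ratio" form and the "cross-multiplied" form in both directions. I would present the proof as the two displayed chains of equivalences above, one per direction, keeping the role of the positivity hypothesis explicit at the denominator-clearing step.
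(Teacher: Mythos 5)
Your proof is correct and matches the paper's approach: the paper itself only cites \cite{BS86} and records the witness $y^{*}=\tfrac{1}{2}\bigl(\min_i R^{(1)}_i/R^{(2)}_i+\max_j L^{(1)}_j/L^{(2)}_j\bigr)$, which is exactly the midpoint construction in your converse direction, and your forward direction is the routine denominator-clearing step licensed by $R^{(2)}_iL^{(2)}_j>0$. Your explicit handling of the degenerate cases $l_1=0$ or $l_2=0$ is a small but welcome addition the paper leaves implicit.
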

Note that a solution $(y^{*},x_{1}^{*},...,x_{n}^{*})$ of the original system 
(\ref{SSPS:eq1})
can be constructed from a solution $(x_{1}^{*},...,x_{n}^{*})$ of the resulting system by 
\[y^{*}:= \frac{\min{\{ R_{i}(x_{1}^{*},...,x_{n}^{*}) \mid i=1,...,l_{1} \} } + \max{\{ L_{j}(x_{1}^{*},...,x_{n}^{*}) \mid j=1,...,l_{2} \} }}{2}.\]
This elimination rule is used in the solvability sequence method under the {\it bipartiteness condition} for determinant systems, i.e.,
polynomial systems whose constraints are of the form ``${\rm sign}(\det(v_{i_{1}},...,v_{i_{r}})) = \chi (i_{1},...,i_{r})$.''
In the determinant system, one can detect the signs of $R^{(2)}_{i}(x_{1},...,x_{n})$ and $L^{(2)}_{j}(x_{1},...,x_{n})$
in advance using the information of $\chi$, and can solve the polynomial system by $y$.
\begin{defn}~\mbox{(\cite{BS86})}\\
Consider the polynomial system (\ref{SSPS:eq1}) arising from a determinant system.
Then each constraint can be rewritten as ``${\rm sign}(\det(v_{i_{1}},...,v_{i_{r}})) = \chi (i_{1},...,i_{r})$'' 
where $(i_1,\dots,i_r) \in \Lambda (n,r)$ and can be indexed by $\{ i_1,\dots,i_r \}$.
Let $A$ be the set of indices which defines constraints of the form $y < \frac{R^{(1)}_{i}(x_{1},...,x_{n})}{R^{(2)}_{i}(x_{1},...,x_{n})}$
and $B$ the index set whose elements define constraints of the form $y > \frac{L^{(1)}_{j}(x_{1},...,x_{n})}{L^{(2)}_{j}(x_{1},...,x_{n})}$.
The polynomial system (\ref{SSPS:eq1}) is said to be bipartite if
$|\{ i_1,\dots,i_r \} \cap \{ j_1,\dots,j_r\}| = r-1$ for all $\{ i_1,\dots,i_r \} \in A, \{ j_1,\dots,j_r\} \in B$.
\end{defn}
In \cite{BS86}, it is proved that the feasibility of the polynomial system (\ref{SSPS:eq1}) is equivalent to that of the following polynomial system 
under the bipartiteness condition:
\[ P_k(x_1,...,x_n) >  (\text{or }=) \ 0 \ (k=1,...,l_3). \]
Therefore, the elimination does not produce new constraints under the bipartiteness condition, and one can proceed with eliminations without creating inconsistency.
It detects realizability if all variables are eliminated by this elimination rule.
However, the restriction of the bipartiteness condition is very strong.
We can easily get rid of the restriction of bipartiteness condition by allowing
the elimination rules to {\it destroy} the determinant system.
This relaxation turns out to be very useful when it is used together with {\it branching rules},
which will be explained later.

Before explaining branching rules, we also consider an elimination rule for polynomial systems containing equalities.
\begin{prop} \label{elimination2}
Let $l_{1},l_{2},l_3 \geq 0$ be integers and
$P_i,E_j,E,Q_k$ be rational polynomials for $i=1,...,l_1, j=1,...,l_2, k=1,...,l_3$.
Then the  feasibility of rational polynomial system:
\begin{equation} \label{SSPS:eq3}
 \begin{cases}
P_{i}(x_{1},...,x_{n},y) > 0 & (i=1,...,l_{1}), \\
E_{j}(x_{1},...,x_{n},y) = 0 & (j=1,...,l_{2}), \\
y = E(x_1,...,x_n), \\
Q_k(x_1,...,x_n) > (\text{or }=) \ 0 & (k=1,...,l_3) \\ 
\end{cases} 
\end{equation}
is equivalent to that of the following rational polynomial system:
\begin{equation} \label{SSPS:eq4}
\begin{cases}
P_{i}(x_{1},...,x_{n},E(x_1,...,x_n)) > 0 & (i=1,...,l_{1}), \\
E_{j}(x_{1},...,x_{n},E(x_1,...,x_n)) = 0 & (j=1,...,l_{2}), \\
Q_k(x_1,...,x_n) > (\text{or }=)  \ 0 & (k=1,...,l_3)
\end{cases}
\end{equation}
\end{prop}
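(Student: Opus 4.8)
The plan is to prove the equivalence of feasibility by exhibiting an explicit bijection between the solution sets of the two systems, exactly as in the $y = E(x_1,\dots,x_n)$ substitution idea underlying Proposition~\ref{elimination1}. First I would establish the direction ``$(\ref{SSPS:eq3})$ feasible $\Rightarrow$ $(\ref{SSPS:eq4})$ feasible''. Given a solution $(x_1^*,\dots,x_n^*,y^*)$ of $(\ref{SSPS:eq3})$, the constraint $y^* = E(x_1^*,\dots,x_n^*)$ forces $y^*$ to be exactly the value $E(x_1^*,\dots,x_n^*)$, so substituting this into $P_i$ and $E_j$ turns the first two families of constraints of $(\ref{SSPS:eq3})$ into precisely the first two families of $(\ref{SSPS:eq4})$, evaluated at $(x_1^*,\dots,x_n^*)$; the $Q_k$ constraints are untouched. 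Hence $(x_1^*,\dots,x_n^*)$ solves $(\ref{SSPS:eq4})$.

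Conversely, for ``$(\ref{SSPS:eq4})$ feasible $\Rightarrow$ $(\ref{SSPS:eq3})$ feasible'', I would take a solution $(x_1^*,\dots,x_n^*)$ of $(\ref{SSPS:eq4})$ and simply define $y^* := E(x_1^*,\dots,x_n^*)$. Then the equation $y^* = E(x_1^*,\dots,x_n^*)$ holds by construction, and because $P_i(x_1,\dots,x_n,y)$ evaluated at $y = y^*$ equals $P_i(x_1,\dots,x_n,E(x_1,\dots,x_n))$ (and likewise for $E_j$), the satisfied constraints of $(\ref{SSPS:eq4})$ are exactly the remaining constraints of $(\ref{SSPS:eq3})$ at $(x_1^*,\dots,x_n^*,y^*)$. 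The $Q_k$ constraints again carry over verbatim. So $(x_1^*,\dots,x_n^*,y^*)$ solves $(\ref{SSPS:eq3})$, completing the equivalence.

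The only subtlety — and the single point that needs care rather than being purely routine — is that $E$, $P_i$, $E_j$, $Q_k$ are \emph{rational} polynomials, so one must keep track of their domains of definition: the substitution $y \mapsto E(x_1,\dots,x_n)$ is only meaningful where the denominator of $E$ is nonzero, and the composed rational functions $P_i(x,\dots,E(x))$ may introduce or inherit further denominators. The clean way to handle this is to adopt the convention (implicit throughout the paper) that every rational constraint tacitly carries the side condition that all denominators appearing in it are nonzero; with that convention, a point satisfies a rational (in)equality precisely when it lies in the common domain and the (in)equality holds there, and the substitution respects this because the denominator of $E$ at $(x_1^*,\dots,x_n^*)$ is forced to be nonzero in both systems (in $(\ref{SSPS:eq3})$ by the constraint $y = E(x_1,\dots,x_n)$ being well-formed, and in $(\ref{SSPS:eq4})$ by the composed constraints being well-formed). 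I do not expect any genuine obstacle beyond this bookkeeping: the statement is essentially a formalization of the elementary fact that an explicitly solved-for variable can be back-substituted, and the proof is a short two-way check.
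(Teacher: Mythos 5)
Your proof is correct and matches the paper's own (very brief) argument: the paper simply notes that a solution of (\ref{SSPS:eq3}) is recovered from one of (\ref{SSPS:eq4}) by setting $y^* := E(x_1^*,\dots,x_n^*)$, which is exactly your back-substitution in both directions. The extra bookkeeping you add about denominators of the rational functions is a reasonable refinement but not something the paper addresses.
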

\begin{proof}
A solution $(x_{1}^{*},...,x_{n}^{*},y^*)$ of the original system
(\ref{SSPS:eq3}) can be constructed 
from a solution $(x_{1}^{*},...,x_{n}^{*})$ of the resulting system (\ref{SSPS:eq4}) by
$y^{*}:=E(x_{1}^{*},...,x_{n}^{*})$. 
\end{proof}
\\
\\
In Propositions \ref{elimination1} and \ref{elimination2}, we regard a variable $y$ appearing in these forms
as {\it redundant}.
%We try to eliminate variables of this type as far as possible.
To apply these elimination rules to as many variables of this type as possible, we consider
the following additional rules, which are called {\it  branching rules}.
\begin{prop}\label{branching1}
Let $l_1,l_2 \geq 0$.
The polynomial system:
\begin{equation}\label{branching1_ps1}
\begin{cases}
 A_{i}(x_{1},...,x_{n})y < B_{i}(x_{1},...,x_{n}) & (i=1,...,l_1), \\
 P_j(x_1,...,x_n) > (\text{or }=)  \ 0 & (j=1,...,l_2)
 \end{cases}
 \end{equation}
is feasible if and only if one of the following rational polynomial systems is feasible
\begin{equation} \label{branching1_ps2}
 \begin{cases}
{\rm sign}(A_{i}(x_{1},...,x_{n})) = s(i) & (i=1,...,l_1), \\
y < \frac{B_{i}(x_{1},...,x_{n})}{A_{i}(x_{1},...,x_{n})} & (i \leq l_1,s(i)=+),\\
y > \frac{B_{i}(x_{1},...,x_{n})}{A_{i}(x_{1},...,x_{n})} & (i \leq l_1,s(i)=-),\\
B_i(x_1,...,x_n) > 0 & (i \leq l_1,s(i)=0),\\
P_j(x_1,...,x_n) > (\text{or }=)  \ 0 & (j=1,...,l_2)
\end{cases} 
\end{equation}
for $s:\{ 1,...,l\} \rightarrow \{ +,-,0\}$.
\end{prop}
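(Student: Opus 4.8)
The plan is to prove both directions of the equivalence by an exhaustive case analysis on the sign of each coefficient polynomial $A_i$, evaluated at a candidate solution; no induction or auxiliary machinery is needed, and both directions share the same elementary computation run forwards and backwards.

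For the ``only if'' direction, I would start from a solution $(x_1^*,\dots,x_n^*,y^*)$ of (\ref{branching1_ps1}) and define $s:\{1,\dots,l_1\}\rightarrow\{+,-,0\}$ by $s(i):=\mathrm{sign}(A_i(x_1^*,\dots,x_n^*))$. Then I would verify that this very same point solves the system (\ref{branching1_ps2}) attached to $s$: the first block $\mathrm{sign}(A_i(x^*))=s(i)$ holds by the definition of $s$; for each $i$ with $s(i)=+$ I divide the strict inequality $A_i(x^*)y^*<B_i(x^*)$ by $A_i(x^*)>0$ to obtain $y^*<B_i(x^*)/A_i(x^*)$; for $s(i)=-$ I divide by $A_i(x^*)<0$, which reverses the inequality and gives $y^*>B_i(x^*)/A_i(x^*)$; for $s(i)=0$ the original constraint reads $0<B_i(x^*)$; and the constraints $P_j>(\text{or }=)0$ are common to both systems. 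Hence at least one of the systems (\ref{branching1_ps2}) is feasible.

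For the ``if'' direction, I would take some $s$ and some solution $(x_1^*,\dots,x_n^*,y^*)$ of the corresponding system (\ref{branching1_ps2}) and run the same computation in reverse. The first block forces $A_i(x^*)$ to have sign $s(i)$, so the fractions $B_i(x^*)/A_i(x^*)$ are well-defined whenever $s(i)\neq 0$. For $s(i)=+$, multiplying $y^*<B_i(x^*)/A_i(x^*)$ by $A_i(x^*)>0$ recovers $A_i(x^*)y^*<B_i(x^*)$; for $s(i)=-$, multiplying $y^*>B_i(x^*)/A_i(x^*)$ by $A_i(x^*)<0$ reverses the inequality and again yields $A_i(x^*)y^*<B_i(x^*)$; for $s(i)=0$ we have directly $A_i(x^*)y^*=0<B_i(x^*)$ from the constraint $B_i(x^*)>0$. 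Together with the shared $P_j$ constraints, this shows (\ref{branching1_ps1}) is feasible, completing the equivalence.

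I do not expect a genuine obstacle: the completeness of the disjunction over all $s$ is simply the trichotomy of the sign of a real number, and the equivalence of each branch with the original constraint is the elementary fact that multiplying a strict inequality by a nonzero real preserves it, reversing its direction precisely when the multiplier is negative. The only points that need care are the strictness bookkeeping in the case $s(i)=0$ (the constraint $A_iy<B_i$ degenerates to the strict inequality $B_i>0$, not $B_i\geq 0$) and the observation that $y$ is not actually eliminated by this rule; its role is only to expose the sign of the coefficient of $y$ so that the elimination rule of Proposition~\ref{elimination1} becomes applicable afterwards.
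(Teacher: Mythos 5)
Your proof is correct and is essentially the paper's argument: the paper partitions $\mathbb{R}^{n+1}$ into the $3^{l_1}$ sign cells $R_s$ of the $A_i$ and observes that the solution set intersected with each cell is exactly described by the corresponding branched system, which is precisely your pointwise case analysis (defining $s$ from the signs of $A_i$ at a solution and dividing/multiplying by $A_i$) phrased set-theoretically. No discrepancy worth noting.
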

\begin{proof}
We partition $\mathbb{R}^{n+1}$ into $3^{l_1}$ parts $R_s:=\{ (x_1,...,x_n,y) \mid {\rm sign}(A_{i}(x_{1},...,x_{n})) = s(i) \ (i=1,...,l_1) \}$
for $s:\{ 1,...,l\} \rightarrow \{ +,-,0\}$.
Let $S$ be the solution space of the polynomial system (\ref{branching1_ps1}).
Then $S \cap R_s$ is represented by the polynomial system (\ref{branching1_ps2}).
$S$ is non-empty if and only if $S \cap R_s$ is non-empty for some $s:\{ 1,...,l\} \rightarrow \{ +,-,0\}$.
\end{proof}
\\
\\
Observe that one can actually reduce {\it the search range} as follows
\begin{equation} \label{branching1_ps3}
\begin{cases}
{\rm sign}(A_{i}(x_{1},...,x_{n})) = s(i) & (i=1,...,l_1), \\
y < \frac{B_{i}(x_{1},...,x_{n})}{A_{i}(x_{1},...,x_{n})} & (i \leq l_1,s(i)=+),\\
y > \frac{B_{i}(x_{1},...,x_{n})}{A_{i}(x_{1},...,x_{n})} & (i \leq l_1,s(i)=-),\\
P_j(x_1,...,x_n) > (\text{or }=)  \ 0 & (j=1,...,l_2)
\end{cases} 
\end{equation}
for $s:\{ 1,...,l_1\} \rightarrow \{ +,-\}$.
\\
\\
This is because $(x^*_1+\epsilon_1,\dots,x^*_n+\epsilon_n,y^*+\epsilon_{n+1})$ is a solution of
(\ref{branching1_ps1}) for a solution $(x^*_1,\dots,x^*_n,y^*)$ of (\ref{branching1_ps1}) and
any sufficiently small $\epsilon_1,\dots,\epsilon_n > 0$.
The following is a branching rule for the case when $y$ appears in equations.
\begin{prop}\label{branching2}
Let $l_1,l_2,l_3 \geq 0$.
The polynomial system:
\begin{equation} \label{branching2_ps1}
\begin{cases}
A_{i}(x_1,...,x_n,y) > 0 & (i=1,...,l_1), \\ 
A_{j}(x_1,...,x_n,y) = 0 & (j=l_1+1,...,l_1+l_2), \\
A(x_1,...,x_n)y = B(x_1,...,x_n), \\
P_k(x_1,...,x_n) > (\text{or }=)  \ 0 & (k=1,...,l_3)
\end{cases} 
\end{equation}
is feasible if and only if one of the following rational polynomial systems is feasible
\begin{align}\label{branching2_ps2}
& \notag \begin{cases}
A(x_1,...,x_n) = 0, & \\
B(x_1,...,x_n) = 0, & \\
A_{i}(x_{1},...,x_{n},y) > 0  & (i=1,...,l_1),\\
A_{j}(x_1,...,x_n,y) = 0 & (j=l_1+1,...,l_1+l_2),\\
P_k(x_1,...,x_n) > (\text{or }=)  0 & (k=1,...,l_3) 
\end{cases}
\\
&\begin{cases}
A(x_1,...,x_n) > 0, & \\
A_{i}(x_1,...,x_n,\frac{B(x_1,...,x_n)}{A(x_1,...,x_n)}) > 0 & (i=1,...,l_1),\\
A_{j}(x_1,...,x_n,\frac{B(x_1,...,x_n)}{A(x_1,...,x_n)}) = 0 & (j=l_1+1,...,l_1+l_2),\\
P_k(x_1,...,x_n) > (\text{or }=)  \ 0 & (k=1,...,l_3)\\
\end{cases}
\\
& \notag \begin{cases}
A(x_1,...,x_n) < 0, & \\
A_{i}(x_1,...,x_n,\frac{B(x_1,...,x_n)}{A(x_1,...,x_n)}) > 0 & (i=1,...,l_1),\\
A_{j}(x_1,...,x_n,\frac{B(x_1,...,x_n)}{A(x_1,...,x_n)}) = 0 & (j=l_1+1,...,l_1+l_2),\\
P_k(x_1,...,x_n) > (\text{or }=)  \ 0 & (k=1,...,l_3)
\end{cases}
\end{align}
\end{prop}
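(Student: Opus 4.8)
The plan is to prove the equivalence by a case split on the sign of $A(x_1,\dots,x_n)$, exactly as the three displayed systems suggest, treating the direction ``$\Leftarrow$'' first (which is the easy one) and then ``$\Rightarrow$''. First I would observe that the three disjuncts correspond to the three possibilities $A=0$, $A>0$, $A<0$, and that these cover $\mathbb{R}^n$; so a solution of any one of them, together with a suitable value of $y$, yields a solution of (\ref{branching2_ps1}), and conversely every solution of (\ref{branching2_ps1}) falls into exactly one of the three cases.

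For the ``$\Leftarrow$'' direction I would argue case by case. If the first system is feasible at $(x_1^*,\dots,x_n^*,y^*)$, then $A(x^*)=0$ and $B(x^*)=0$, so the equation $A(x^*)y=B(x^*)$ reads $0=0$ and is satisfied by the given $y^*$; the remaining constraints of (\ref{branching2_ps1}) are literally the remaining constraints of the first system, so $(x^*,y^*)$ solves (\ref{branching2_ps1}). If the second or third system is feasible at some $(x^*_1,\dots,x^*_n)$ with $A(x^*)\neq 0$, then I set $y^* := B(x^*)/A(x^*)$, which is well-defined precisely because $A(x^*)\neq 0$; by construction $A(x^*)y^* = B(x^*)$, and substituting this $y^*$ into $A_i$ and $A_j$ recovers exactly the inequalities and equalities $A_i(x^*,B(x^*)/A(x^*))>0$ and $A_j(x^*,B(x^*)/A(x^*))=0$ appearing in those systems, while the $P_k$ constraints are unchanged. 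Hence $(x^*,y^*)$ solves (\ref{branching2_ps1}).

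For the ``$\Rightarrow$'' direction, suppose $(x^*_1,\dots,x^*_n,y^*)$ solves (\ref{branching2_ps1}). I split on the value of $A(x^*)$. If $A(x^*)=0$, then the equation $A(x^*)y^*=B(x^*)$ forces $B(x^*)=0$, and then $(x^*,y^*)$ satisfies all constraints of the first system. If $A(x^*)>0$ (resp.\ $<0$), then from $A(x^*)y^* = B(x^*)$ we get $y^* = B(x^*)/A(x^*)$, so substituting this identity into the constraints $A_i(x^*,y^*)>0$ and $A_j(x^*,y^*)=0$ turns them into $A_i(x^*,B(x^*)/A(x^*))>0$ and $A_j(x^*,B(x^*)/A(x^*))=0$; together with $A(x^*)>0$ (resp.\ $<0$) and the untouched $P_k$ constraints, this shows the second (resp.\ third) system is feasible at $(x^*_1,\dots,x^*_n)$. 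This exhausts all cases and completes the equivalence.

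The argument is entirely routine; there is no real obstacle, only bookkeeping. The one point that deserves a word of care is that in the $A(x^*)=0$ branch one must not divide by $A(x^*)$, which is why that case is handled by a separate system that does not mention $B/A$ at all; this is the reason the disjunction has three members rather than two, mirroring the ``$s(i)=0$'' case of Proposition \ref{branching1}. As with Proposition \ref{branching1}, one could further note that if (\ref{branching2_ps1}) has a solution at which $A(x^*)=0$ but the solution set has positive dimension, a nearby solution may have $A\neq 0$; however, unlike the strict-inequality situation there, the equation $A(x_1,\dots,x_n)y = B(x_1,\dots,x_n)$ genuinely requires the $A=0,\ B=0$ branch in general, so the three-way split cannot be reduced to two.
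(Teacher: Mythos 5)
Your proof is correct and follows essentially the same route as the paper, which simply partitions $\mathbb{R}^{n+1}$ according to the sign of $A(x_1,\dots,x_n)$ and notes the result as an analogue of Proposition \ref{branching1}; you have merely written out the case analysis and both implications in full detail. No issues.
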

\begin{proof}
This is proved similarly to Proposition \ref{branching1}. We consider the following partition of $R^{n+1}$:
$R_s:=\{(x_1,...,x_n,y) \mid {\rm sign}(A(x_1,...,x_n)) = s\}$ for $s=+,-,0$.
\end{proof}
\\
\\
%We choose a square-free variable to eliminate, apply the branching rule and the elimination rule, and then
%search {\it a feasible branch}.
To solve the polynomial system, the following operations are applied repeatedly.
We first choose a variable $y$ that can be eliminated by the above 4 rules.
Then the branching rule in Proposition \ref{branching1} or Proposition \ref{branching2} is applied to obtain a set of polynomial system
(\ref{branching1_ps2}) or (\ref{branching2_ps2}).
We choose a sign pattern and apply the elimination rule in Proposition \ref{elimination1} or Proposition \ref{elimination2}, 
and check the feasibility recursively.
If feasibility of the polynomial system for this sign pattern is certified, the original polynomial system
is proved to be feasible. Otherwise, we backtrack and try another sign pattern until feasibility is certified.
If no feasible polynomial system is found, we give up deciding feasibility.
We adopt the following termination condition.
If all variables are eliminated and the system is consistent, 
the original system is feasible.
If a system does not have variables that can be eliminated, we try 
random realizations to prove the feasibility.
There are many polynomial systems, which are trivially feasible but are hard to solve algebraically.
For example, the following polynomial system is clearly feasible but is algebraically complicated.
\[ x^{100}-y^{49}+1000y^{23} < 1, \  x^5 > 2, \ y^5 > 2. \]
Random realizations work well as long as the solution set is sufficiently large
and are not affected so much by the algebraic complexity.

\begin{alg}
Sol($P$) ($P$: polynomial system)
\begin{enumerate}
\item If there are no variables that can be eliminated by the above $4$ rules in $P$, try random assignments to the remaining variables.
If a solution is found, return `feasible.' Otherwise return `unknown.'
\item Choose a variable $y$ to eliminate in $P$. Apply one of the branching rules (Proposition \ref{branching1}, Proposition \ref{branching2})
to obtain a set of polynomial systems $P'_1,\dots,P'_m$.
\item For $i = 1,\dots,m$, apply one of the elimination rules (Proposition \ref{elimination1}, Proposition \ref{elimination2}) 
to $P'_i$ and obtain a new polynomial system $Q_i$. If Sol($Q_i$) returns `feasible,' return `feasible.'
\item Return `unknown.'
\end{enumerate}
\end{alg}

%{\bf Remark:}
%The resulting systems are quite different according to the elimination orders of variables, and 
%whether the method works well or not is depends  heavily on the elimination orders of variables.
%On the other hand, normalization of variables and fixing a basis never {\it disturbs} variable eliminations.

%It is a much easier task to find a solution of the polynomial system  because it suffices to find a solution
%at some node.
%If we eliminate all variables at some node and obtain a consistent system, we prove the feasibility.
%In addition, we would also like to prove the feasibility of the polynomial systems as follows.
%\[ x^2-2xy+y^{2} > 0. \]
%\[ a^{2}-b^{3} > 0, a^{3} - 2b^{2} < 0. \]
%The above systems are {\it clearly} feasible, but there exist no efficient and unified algorithm to prove the feasibility known to the authors.
%We propose to use random assignments to variables in order to prove the feasibility.
We sometimes generate too many branchings and
thus apply the iterative lengthy search to the following tree search problem.
The root node consists of the original polynomial system.
Starting from the root node, we expand nodes using the elimination rules and the branching rules repeatedly.
We decide whether we arrive at goal nodes or not using random assignments to the remaining variables.
In this setting, we define the cost of each node $x$ by $c(x):=log_{2}(n_b)$, where $n_b$ is the number of branching
at $x$, and apply the iterative lengthening search to it by increasing the limit of the total cost by $1$.
\begin{remark}
Equalities containing no square-free variables cannot be eliminated by the above rules.
The probability of yielding a solution to equality constraints by random realizations
is $0$ and if we cannot eliminate all equality constraints, it is highly unlikely 
that the above method generate a realization.
In this case, which is quite rare for small instances,
we need to use general algorithms such as
Cylindrical Algebraic Decomposition~\cite{C75} to find solutions.

We point out another tractable case, namely, the case when the ideal generated by the equality constraints 
is zero-dimensional (as an ideal in $\mathbb{C}[x_s \mid s \in S]$, which denote the polynomial ring over $\mathbb{C}$
in variables $x_s$ which appear in equality constraints).
In other words, it is the case when the equality constraints have a finite number of complex solutions.
In this case, we extract real solutions, substitute each solution to the original polynomial system and apply the above methods.
We can check the zero-dimensionality and solve the equalities using Gr\"obner basis~\cite{CLO92}.
\end{remark}

\section{Realizability classification} \label{RealClass}
We apply our method to OM($4,8$) and OM($3,9$). All computations are made on a cluster of 4 servers, with each node having
total 16-core CPUs (each core running at 2.2 GHz) and 128 GB RAM (each process uses only 1 CPU).

First, we apply the polynomial reduction techniques described in section 3.1 to  OM($3,9$).
Table \ref{variables} and Table \ref{constraints} show the distributions of the number of variables and constraints of the resulting polynomial systems
for OM($3,9$).

\begin{table}[h]
\begin{center}
\begin{tabular}{| c | c | c | c | c | c | c | c | c | c | c | c |  }
\hline
No. of variables & 1& 2 & 3 & 4 &  5 & 6 & 7 & 8 & 9 & 10 & 11-18 \\
 \hline
No. of OMs & 0 &  21 & 1,411 & 13,261 & 47,888 & 91,855 & 121,977 & 107,869 & 59,284 & 16,814 & 673\\
 \hline 
\end{tabular}
\end{center}
\caption{Number of the variables (OM($3,9$))}
\label{variables}

\begin{center}
\begin{tabular}{| c | c | c | c | c | c | c | c | c | c | c | c | c | c |  }
\hline
No. of variables & 1-2 & 3-4 & 5-6 & 7-8 &  9-10 & 11-12 & 13-14 & 15-16 & 17-18 \\
 \hline
No. of OMs & 659 & 22,340 & 107,465 & 168,995 & 114,595 &  38,944 & 7,330 & 694 & 31\\
 \hline 
\end{tabular}
\end{center}
\caption{Number of the constraints (OM($3,9$))}
\label{constraints}
\end{table} 
We note here that the case of more than 10 variables occurs as an exceptional case, where
the polynomial system consists of the following type of constraints.
\[ xy > zw, \ x,y,z,w > 0.\]
Nakayama~\cite{N07} proved that oriented matroids with such polynomial systems which
do not admit  biquadratic final polynomials are realizable. 
We detect polynomial systems of this type and stop the polynomial system reductions
because our method of realizations can solve such polynomial systems easily.

We apply our method to search for realizations to the resulting polynomial systems
%For random assingments, we consider the uniform distribution of $\{ n/100 \mid n=1,2,...,10,000 \}^{m}$, where
%$m$ is the number of variables and we try random assignments $1000$ times at every node.
%Our method manages to find realizations to
and manage to find realizations of $460,778$ oriented matroids in OM($3,9$).
Table \ref{time} shows the distribution of the time which our method consumed to find realizations.

\begin{table}[h]
\begin{center}
\begin{tabular}{| c | c | c | c | c | c | c | c | c | c |   }
\hline
Time (ms) & $1$-$10$ & $10$-$10^2$ & $10^2$-$10^3$ & $10^3$-$10^4$ & $10^4$-$10^5$ & $10^5$-$10^6$ & $10^6$-$10^7$ & $10^7$-$10^8$ & $10^8$-$10^9$ \\
 \hline
No. of OMs & $0$ & $241,593$ & $163,927$ & $25,893$ & $19,559$ & $7,534$ & $1,599$ & $615$ & $58$ \\
 \hline 
\end{tabular}
\end{center}
\caption{Computation time (OM($3,9$))}
\label{time}
\end{table} 

Remaining one oriented matroid turns out to be an irrational one, which was found by Perles (See~\cite[p.73]{G03}).
We can realize it using Gr\"obner basis.

As a result, we obtain complete classification of OM($3,9$).
In addition, it leads classification of OM($6,9$) because
the realizability of oriented matroids are preserved by the duality~\cite{BLSWZ99}.
Similarly, we apply our method to OM($4,8$) and manage to give realizations to all realizable oriented matroids in OM($4,8$) (Theorem~\ref{realizability result})
except for two irrational ones found by Nakayama~\cite{N07}.
These two oriented matroids can also be realized using Gr\"obner basis.
\begin{thm}
There are precisely $1,1$ and $2$ irrational realizable oriented matroids in OM($3,9$), OM($6,9$) and OM($4,8$) respectively.
\end{thm}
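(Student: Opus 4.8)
The plan is to establish this theorem as a direct computational corollary of the classification work described in Section~\ref{RealClass}, rather than as an independent mathematical result. The statement asserts that exactly $1$, $1$, and $2$ oriented matroids in OM($3,9$), OM($6,9$), and OM($4,8$) respectively are realizable but admit no rational realization. First I would note that the algorithm Sol($P$) of Section~3 produces, whenever it succeeds, an \emph{explicit} realization, and a close inspection of the elimination rules (Propositions~\ref{elimination1}--\ref{branching2}) shows that every operation---solving for a variable $y$ via a rational expression $E(x_1,\dots,x_n)$, or choosing the midpoint $y^* = (\min R_i + \max L_j)/2$, or making a random assignment---stays within $\mathbb{Q}$ when started from rational data. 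Hence every oriented matroid for which our method returns `feasible' is in fact \emph{rationally} realizable. By Theorem~\ref{realizability result} and the accompanying discussion, the method succeeds on all realizable members of OM($3,9$) and OM($4,8$) except for one member of OM($3,9$) (the Perles configuration, \cite[p.73]{G03}) and two members of OM($4,8$) (the examples of Nakayama~\cite{N07}); these three are realized instead via Gr\"obner basis computations over a real algebraic extension.

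Next I would supply the two halves of the count. For the upper bound: the three exceptional oriented matroids just named are known classical examples of irrational realizable oriented matroids, so OM($3,9$) contains \emph{at least} one and OM($4,8$) \emph{at least} two. For the matching upper bound, the key point is that an oriented matroid is rationally realizable if and only if the polynomial system~(\ref{realizability}) has a solution over $\mathbb{Q}$; since our algorithm has exhibited rational solutions for every realizable oriented matroid in these classes \emph{other than} the three exceptions, there can be no further irrational ones. It then remains to verify that each of the three exceptional systems is genuinely irrational, i.e.\ that~(\ref{realizability}) is infeasible over $\mathbb{Q}$. This is where I would invoke the Gr\"obner basis computation: for each of these three oriented matroids, after the reduction techniques of Section~3.1 the equality constraints generate a zero-dimensional ideal, the real points of its variety are finitely many and can be enumerated, and one checks that none of them lies in $\mathbb{Q}^S$ while at least one satisfies all the strict inequalities (giving realizability) --- this is exactly the ``tractable case'' described in the final Remark of Section~3. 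Finally, the OM($6,9$) count of $1$ is immediate from the OM($3,9$) count together with oriented matroid duality~\cite{BLSWZ99}, which preserves both realizability and the field of definition of a realization: $M$ has a rational realization iff its dual $M^*$ does.

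The main obstacle I expect is not the combinatorics but the \emph{certification of irrationality} for the three exceptional oriented matroids: one must argue rigorously that the zero-dimensional ideal has no rational point satisfying the strict sign conditions, which in practice means trusting an exact Gr\"obner basis computation and an exact description of the finitely many real solutions (e.g.\ via a rational univariate representation, so that the relevant coordinate is seen to be a root of an irreducible polynomial of degree $\geq 2$ that takes the appropriate sign). A secondary, more mundane point requiring care is the claim that the whole pipeline of Section~3 is ``rationality-preserving'': one should check that fixing a basis, negating variables, normalizing rows and columns, and each branching/elimination step all commute with working over $\mathbb{Q}$, so that success of Sol($P$) really certifies \emph{rational} realizability and not merely real realizability. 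Both of these are verification tasks rather than proofs of new phenomena, and the explicit realizations and final polynomials posted on the authors' web page serve as independently checkable certificates for every case.
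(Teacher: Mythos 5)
Your proposal is correct and follows essentially the same route as the paper, which presents this theorem without a formal proof as a corollary of the computational narrative of Section 4: the elimination/branching/random-assignment pipeline yields explicit rational realizations for all realizable members except the one Perles oriented matroid in OM($3,9$) and the two Nakayama examples in OM($4,8$), whose irrationality is certified via the zero-dimensional Gr\"obner basis case, with OM($6,9$) obtained by duality. Your write-up merely makes explicit the two points the paper leaves implicit (that the pipeline is rationality-preserving, and that duality preserves the field of definition), both of which are correct.
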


From these results, we obtain the combinatorial types of point configurations (Theorem~\ref{point configurations polytopes})
by generating relabeling classes of acyclic realizable oriented matroids.
Matroid polytopes (i.e., acyclic oriented matroids with all elements {\it extreme points}. For details, see \cite{BLSWZ99}.) are extracted from them. 
Then we compute the face lattices of the matroid polytopes and decide whether they occur from
some realizable matroid polytopes or not in order to obtain combinatorial types of polytopes  (Theorem~\ref{point configurations polytopes}).
All face lattices of matroid polytopes in OM($4,8$),OM($3,9$) and OM($6,9$) turned out to be realizable as those of convex polytopes.
Table~\ref{no_acyclic} and Table \ref{no_face_lattices} summarize the results.
\\
%\\
%\\
%{\bf Results:}
%\begin{itemize}
%\item[{\rm (a)}]
%Among $10,775,236$ relabeling classes of acyclic oriented matroids in OM($4,8$),
%$10,559,305$ are realizable and $215,931$ are non-realizable.\\
%Among $250,601$ relabeling classes of matroid polytopes in OM($4,8$),
%$238,399$ are realizable and $12,202$ are non-realizable.
%There are $257$ non-isomorphic face lattices of the matroid polytopes and all of them are realizable as the face lattice of polytopes.
%\item[{\rm (b)}]
%Among $15,296,266$ relabeling classes of acyclic oriented matroids in OM($3,9$),
%$15,287,993$ are realizable and $8,273$ are non-realizable.\\
%There are exactly $1$ relabeling class of matroid polytopes in OM($6,9$) and it is realizable.
%
%\item[{\rm (c)}]
%Among $105,183,749$ relabeling classes of acyclic oriented matroids in OM($6,9$),
%$105,128,749$ are realizable and $55,000$ are non-realizable.\\
%Among $41,030,709$ relabeling classes of matroid polytopes in OM($6,9$),
%$41,008,968$ are realizable and $21,741$ are non-realizable.
%There are $47,923$ non-isomorphic face lattices of the matroid polytopes and all of them are realizable as the face lattice of polytopes.
%\end{itemize}
%
\begin{table}[h]
\begin{center}
\begin{tabular}{| c | c | c | }
\hline
          & acyclic OMs (realizable/non-realizable) & matroid polytopes (realizable/non-realizable) \\
 \hline
OM($4,8$) & $10,775,236$ ($10,559,305/215,931$) & $250,601$ ($238,399/12,202$) \\ 
 \hline 
 OM($3,9$) & $15,296,266$ ($15,287,993/8,273$) & $1$ ($1/0$) \\
 \hline
 OM($6,9$) & $105,183,749$ ($105,128,749/55,000$) & $41,030,709$ ($41,008,968/21,741$) \\
 \hline
\end{tabular}
\end{center}
\caption{Numbers of relabeling classes of acyclic OMs and matroid polytopes}
\label{no_acyclic}
\end{table} 
\begin{table}[h]
\begin{center}
\begin{tabular}{| c | c |  }
\hline
          & total (realizable/non-realizable)\\
 \hline
OM($4,8$) & $257$ ($257/0$) \\
\hline
 OM($3,9$) & $1$ ($1/0$) \\
 \hline
 OM($6,9$) & $47,923$ ($47,923/0$) \\
 \hline
\end{tabular}
\end{center}
\caption{Numbers of non-isomorphic face lattices of matroid polytopes}
\label{no_face_lattices}
\end{table} 
\\
The number of non-isomorphic face lattices of $3$-polytopes with $8$ vertices coincides with the number in \cite[p.424, Table 2]{G03}.
We observe that one can associate a point configuration with rational coordinates
to the combinatorial type of every $5$-polytope with $9$ vertices and thus obtain the following theorem.
\begin{thm}
The combinatorial type of every $5$-polytope with $9$ vertices can be realized by a rational polytope.
\end{thm}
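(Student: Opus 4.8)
The plan is to leverage the computational classification already obtained for OM($6,9$) together with the structural reduction of polytope realizability to oriented matroid realizability. First I would recall from the discussion surrounding Table~\ref{no_acyclic} and Table~\ref{no_face_lattices} that every combinatorial type of a $5$-polytope with $9$ vertices arises as the face lattice of an acyclic realizable matroid polytope in OM($6,9$); indeed the enumeration shows all $47{,}923$ such face lattices are realizable as convex polytopes. So the statement to prove is the strictly stronger claim that each of these can in fact be realized \emph{over $\mathbb{Q}$}, i.e.\ by a rational polytope, not merely over $\mathbb{R}$.

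The key steps are as follows. For each of the $47{,}923$ face lattices, pick one acyclic realizable matroid polytope $M$ in OM($6,9$) inducing it. Apply the realizability-searching machinery of Section~3 (the basis-fixing, minimal-reduced-system, and normalization reductions of Section~3.1, followed by the branching/elimination solver of Section~3.2) to the polynomial system~(\ref{realizability}) associated to $M$. The crucial observation is that the solver, whenever it succeeds, produces a realization by a sequence of rational operations: each elimination step in Propositions~\ref{elimination1} and~\ref{elimination2} reconstructs the eliminated variable $y$ as a rational function (a ratio of polynomials, or the explicit average $y^*:=(\min R_i + \max L_j)/2$) of the remaining variables, and the terminal random assignments can be taken to be rational numbers, since the solution set in the strict-inequality variables is open and hence contains rational points. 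Therefore every realization found by this method has rational coordinates. Since the computation reported in Section~4 shows that \emph{all} realizable oriented matroids in OM($6,9$)—and a fortiori all the matroid polytopes among them—are realized by this method (the only irrational oriented matroids being non-polytopal, or at any rate not among the $47{,}923$ polytope types, which one checks directly from the data), we conclude that each of the $47{,}923$ face lattices is the face lattice of a matroid polytope realized by a rational vector configuration, whose convex hull (after passing to the affine chart given by acyclicity) is a rational $5$-polytope with $9$ vertices.

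The main obstacle will be the potential presence of \emph{equality constraints} whose solution varieties are positive-dimensional but not parametrizable by the rational elimination rules, or irrational: as noted in the Remark following Algorithm~3, an equality containing no square-free variable cannot be eliminated, and the irrational oriented matroids of Perles and Nakayama are realized only via Gr\"obner bases, which need not yield rational points. Hence the heart of the argument is to verify that none of these irrational or Gr\"obner-only cases occurs among the matroid polytopes giving rise to the $47{,}923$ face lattices—i.e.\ that for at least one matroid polytope representative of each face lattice, the solver terminates with a purely rational realization. This is ultimately a finite check against the classification data (the realizations posted at the cited web page), so the proof is a matter of confirming that the rational realizations found cover all $47{,}923$ combinatorial types; the conceptual content is simply that the solver's output is rational by construction.
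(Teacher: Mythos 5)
Your proposal matches the paper's argument: the theorem is established exactly as you describe, by observing that the computational realizations produced for the matroid polytopes in OM($6,9$) are rational and cover all $47{,}923$ face lattices. The paper states this only as a one-line observation (``we observe that one can associate a point configuration with rational coordinates\dots''), so your explicit accounting of why the solver's output is rational by construction, and of the need to check that the irrational exceptional cases do not obstruct any face-lattice type, is if anything more careful than the published justification.
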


\section{Concluding Remarks}
In this paper, we complete the realizability classification of OM($4,8$), OM($3,9$) and OM($6,9$)
by developing new techniques to search for a realization of a given oriented matroid.
Surprisingly, the biquadratic final polynomial method~\cite{BR90}, which is based on a linear programming relaxation,
can detect all non-realizable oriented matroids in these classes.
In addition, one can also find all non-realizable uniform oriented matroids in OM($3,10)$ and OM($3,11$)
by this method~\cite{AAK02,AK06}.
A known minimal non-realizable oriented matroid which cannot be 
determined to be non-realizable by the method
is in OM($3,14$)~\cite{R96}.
It may be of interest to find a minimal example with such property.

Our classification almost reaches the limit of today's computational environments.
However, we can deal with larger instances if the number of instances is small.
Actually, it was successfully applied to OM($4,9$) and OM($5,9$) in order to find the hyperplane arrangements
maximizing the average diameters~\cite{DMMX10} and PLCP-orientations on $4$-cube~\cite{FKM}.
Our classification results are available at\\
{\tt http://www-imai.is.s.u-tokyo.ac.jp/\~{}hmiyata/oriented\_matroids/}

\section*{Acknowledgement}
The authors would like to thank Dr. Hiroki Nakayama for providing his programs.
The computations throughout the paper were done on a cluster server of ERATO-SORST
Quantum Computation and Information Project, Japan Science and Technology Agency.

\end{document}